\setlist[enumerate,1]{label=(\arabic*),ref=\arabic*}
\setlist[itemize,1]{itemsep=0pt}
\newtheorem{theorem}{Theorem}[section]
\newtheorem{proposition}[theorem]{Proposition}
\newtheorem{conjecture}[theorem]{Conjecture}
\theoremstyle{definition}
\newtheorem{remark}[theorem]{Remark}
\theoremstyle{remark}
\newtheorem*{acknowledgment}{Acknowledgment}
\newcommand{\dedekind}[1]{\left(\!\!\left(#1\right)\!\!\right)}
\DeclarePairedDelimiter{\floor}{\lfloor}{\rfloor}
\DeclareMathOperator{\diag}{diag}
\DeclareMathOperator{\lcm}{lcm}
\DeclareMathOperator{\tr}{tr}
\DeclareMathOperator{\Li}{Li}
\DeclareMathOperator{\Gal}{Gal}
\DeclareMathOperator{\SL}{SL}
\title{Remarks on Nahm sums for symmetrizable matrices}
\author{Yuma Mizuno}
\date{}
\begin{document}
\maketitle

\begin{abstract}
  Nahm sums are specific $q$-hypergeometric series associated with symmetric positive definite matrices.
  In this paper we study Nahm sums associated with symmetrizable matrices.
  We show that one direction of Nahm's conjecture, which was proven by Calegari, Garoufalidis, and Zagier for the symmetric case,
  also holds for the symmetrizable case.
  This asserts that the modularity of a Nahm sum implies that a certain element in a Bloch group associated with the Nahm sum is a torsion element.
  During the proof, we investigate the radial asymptotics of Nahm sums. 
  Finally, we provide lists of candidates of modular Nahm sums for symmetrizable matrices based on numerical experiments.
\end{abstract}

\section{Introduction}
The Rogers-Ramanujan identities relate infinite sum and infinite product expressions of $q$-series:
\begin{equation}\label{eq:RR sum}
  \sum_{n=0}^\infty \frac{q^{n^2}}{(q;q)_n} = \frac{1}{(q, q^4; q^5)_\infty},\quad
  \sum_{n=0}^\infty \frac{q^{n^2+n}}{(q;q)_n} = \frac{1}{(q^2, q^3; q^5)_\infty},
\end{equation}
where $(x;q)_n \coloneqq (1-x) (1-q x) \cdots (1-q^{n-1} x)$,
$(x;q)_\infty \coloneqq \prod_{i=0}^\infty (1 - q^i x)$, and
$(x_1,\dots,x_n;q)_\infty \coloneqq (x_1;q)_\infty \cdots (x_n;q)_\infty$.
One of the remarkable consequences of the Rogers-Ramanujan identities is that
the infinite sums in \eqref{eq:RR sum}, which are $q$-hypergeometric series, 
are also modular functions. Specifically, the vector-valued function on the upper-half plane
\begin{align*}
  g(\tau) = 
  \begin{pmatrix}
    q^{-1/60} \displaystyle{\sum_{n=0}^\infty \frac{q^{n^2}}{(q;q)_n}}, &
    q^{11/60} \displaystyle{\sum_{n=0}^\infty \frac{q^{n^2 + n}}{(q;q)_n}}
  \end{pmatrix}^\mathsf{T}
\end{align*}
with $q = e^{2 \pi i \tau}$ has the following transformation formulas:
\begin{align*}
  g(\tau + 1) = 
  \begin{pmatrix}
    \zeta_{60}^{-1} & 0 \\
    0 & \zeta_{60}^{11}
  \end{pmatrix}
  g(\tau), \quad
  g(-\frac{1}{\tau}) = 
  \frac{2}{\sqrt{5}}
  \begin{pmatrix}
    \sin \frac{2 \pi}{5} & \sin \frac{\pi}{5} \\
    \sin \frac{\pi}{5} & -\sin \frac{2 \pi}{5}
  \end{pmatrix}
  g(z)
\end{align*}
where $\zeta_N = e^{2 \pi i / N}$.

In the study of rational conformal field theories,
Nahm \cite{Nahm} considered a higher rank generalization of the infinite sums appearing in the Rogers-Ramanujan identities \eqref{eq:RR sum},
which we call \emph{Nahm sum}.
Let $N$ be a natural number, and suppose that $A \in \mathbb{Q}^{N \times N}$ is a symmetric positive definite matrix,
$b \in \mathbb{Q}^N$ is a vector, 
and $c \in \mathbb{Q}$ is a scalar.
The Nahm sum is defined by
\begin{equation}\label{eq:intro nahm}
  f_{A, b, c} (q) \coloneqq \sum_{n \in \mathbb{N}^N} 
    \frac{q^{ \frac{1}{2} n^\mathsf{T} A n + n^\mathsf{T} b + c}}{(q;q)_{n_1} \cdots (q;q)_{n_N}}.
\end{equation}
In general, Nahm sums rarely have modularity. 
However, as discussed in \cite{Nahm}, a character in rational conformal field theories is often expressed as a Nahm sum
and also possess modularity.
Motivated by such situations, Nahm conjectured that the modularity of 
Nahm sums are related to torsion elements of the Bloch group.
Let us explain the conjecture in a little more detail. The matrix $A$ determines the \emph{Nahm's equation} 
\begin{align}\label{eq:intro nahm equation}
  1 - z_i = \prod_{j=1}^N z_j^{A_{ij}}
  \quad \text{for $i = 1, \dots, N$},
\end{align}
and each solution of this equation determines an element in the Bloch group $B(F)$:
\begin{align}\label{eq:intro xi}
  \xi_{A}(z) \coloneqq \sum_{i=1}^N [z_i] ,
\end{align}
where $F$ is the number field generated by $z_1, \dots, z_N$.
The \emph{Nahm's conjecture} as stated in \cite{Zagier} is that, for any $A$, the Nahm sum $f_{A, b, c}(q)$ is modular 
for some $b, c$ if and only if $\xi_{A}(z)$ is torsion in $B(F)$ for any solution $z = (z_i)_i$ \cite{Nahm,Zagier}.
Although this conjecture itself is known to be false \cite{VlasenkoZwegers,Zagier},
Calegari, Garoufalidis, and Zagier \cite{CGZ} proved one direction of Nahm's conjecture in the following sense:
If the Nahm sum $f_{A,b,c}(q)$ is modular, then the element \eqref{eq:intro xi} associated with the solution of 
Nahm's equation \eqref{eq:intro nahm equation}
in the real interval $(0, 1)^N$ is torsion.
They construct a map
\begin{align*}
  R_\zeta : B(F) / m B(F) \to F_m^\times / F_m^{\times m}
\end{align*}
where $m$ is an integer, $\zeta$ is a primitive $m$th root of unity, and $F_m = F(\zeta)$,
and they show that this map is injective for sufficiently many $m$.
The one direction of Nahm's conjecture follows from this injectivity result
combined with the formula of the asymptotic expansion of Nahm sums given in \cite{GZ-asymptotics}.

In this paper, we study Nahm sums associated with symmetrizable matrices, which have the following form:
\begin{equation}\label{eq:intro nahm symmetrizable}
  f_{A, b, c, d} (q) \coloneqq \sum_{n \in \mathbb{N}^N} 
    \frac{q^{ \frac{1}{2} n^\mathsf{T} A D n + n^\mathsf{T} b + c}}{(q^{d_1}; q^{d_1})_{n_1} \cdots (q^{d_N};q^{d_N})_{n_N}}.
\end{equation}
We have a new input $d = (d_1,\dots,d_N) \in \mathbb{Z}_{>0}^{N}$,
and the symmetric matrix $A$ in the original Nahm sum \eqref{eq:intro nahm} is replaced by a symmetrizable 
matrix $A$ with the symmetrizer $D \coloneqq \diag(d_1,\dots,d_N)$. 
Specifically, in \eqref{eq:intro nahm symmetrizable}, we require that $A D$ is symmetric positive definite rather than $A$ itself.

Prototypical examples of this type of Nahm sums appear in the Kanade-Russell conjecture \cite{KanadeRussell} (in the form by Kur{\c{s}}ung{\"o}z \cite{Kursungoz}),
which gives the mod $9$ version of the Rogers-Ramanujan type identities. 
The conjecture predicts the following $q$-series identities:
\begin{alignat}{2}
  \label{eq:intro KanadeRussell 1}
  f_1 (q) \coloneqq &\sum_{n_1,n_2 \geq 0} \frac{q^{n_1^2 + 3 n_1 n_2 + 3 n_2^2 \phantom{+ 0 n_2 + 0 n_2}}}{(q;q)_{n_1} (q^3;q^3)_{n_2}} &&\overset{?}{=} \frac{1}{(q,q^3,q^6,q^8;q^9)_{\infty}},  \\
  \label{eq:intro KanadeRussell 2}
  f_2 (q) \coloneqq &\sum_{n_1,n_2 \geq 0} \frac{q^{n_1^2 + 3 n_1 n_2 + 3 n_2^2 + n_1 + 3 n_2}}{(q;q)_{n_1} (q^3;q^3)_{n_2}} &&\overset{?}{=} \frac{1}{(q^2,q^3,q^6,q^7;q^9)_{\infty}}, \\
  \label{eq:intro KanadeRussell 3}
  f_3 (q) \coloneqq &\sum_{n_1,n_2 \geq 0} \frac{q^{n_1^2 + 3 n_1 n_2 + 3 n_2^2 + 2 n_1 + 3 n_2}}{(q;q)_{n_1} (q^3;q^3)_{n_2}} &&\overset{?}{=} \frac{1}{(q^3,q^4,q^5,q^6;q^9)_{\infty}}.
\end{alignat}
The Nahm sums in the left-hand sides are given by the matrix 
$A = \begin{bsmallmatrix}
  2 & 1 \\ 3 & 2
\end{bsmallmatrix}$
with the symmetrizer $D = \diag(1,3)$.
The three linear terms are given by $b = \begin{bsmallmatrix} 0 \\ 0 \end{bsmallmatrix}$, $\begin{bsmallmatrix} 1 \\ 3 \end{bsmallmatrix}$, 
and $\begin{bsmallmatrix} 2 \\ 3 \end{bsmallmatrix}$, respectively.
Assuming these identities \eqref{eq:intro KanadeRussell 1}--\eqref{eq:intro KanadeRussell 3}, we see that the vector-valued function
\begin{align*}
  g(\tau) = 
  \begin{pmatrix}
    q^{-1/18} f_1 (q) \\
    q^{5/18} f_2 (q)\\
    q^{11/18} f_3 (q)\\
  \end{pmatrix}
\end{align*}
satisfies the following modular transformation formula:
\begin{align}\label{eq:intro KR modular}
  g(\tau + 1) = 
  \begin{pmatrix}
    \zeta_{18}^{-1} & 0 & 0 \\
    0 & \zeta_{18}^5 & 0 \\ 
    0 & 0 & \zeta_{18}^{11}
  \end{pmatrix}
  g(\tau),\quad
  g(-\frac{1}{\tau}) = 
  \begin{pmatrix}
    \alpha_1 & \alpha_2 & \alpha_4 \\
    \alpha_2 & -\alpha_4 & -\alpha_1 \\ 
    \alpha_4 & -\alpha_1 & \alpha_2
  \end{pmatrix}
  g(\frac{\tau}{3})
\end{align}
where $\alpha_k = \dfrac{1}{2 \sqrt{3} \sin \frac{k \pi}{9}}$.
The formula \eqref{eq:intro KR modular} implies that $g(\tau)$ is a vector-valued modular function on the 
congruence subgroup $\Gamma_0(3) \coloneqq \left\{ \begin{bsmallmatrix} a & b \\  c & d \end{bsmallmatrix} \mid c \equiv 0 \bmod 3 \right\} \subseteq \SL(2, \mathbb{Z})$.

Nahm sums for symmetrizable matrices also appear in various areas of mathematics, such as 
partition identities \cite{KanadeRussell_Staircases,Kursungoz19,Kursungoz,TakigikuTsuchioka}, 
character formulas of principal subspaces for affine Lie algebras of twisted type \cite{ButoracSadowski,HKOTT,okado_parafermionic_2022,PennSadowski17,PennSadowski18},
and $Y$-systems on cluster algebras associated with skew-symmetrizable matrices \cite{mizuno2020difference}.

The purpose of this paper is to demonstrate that the theory of Nahm sums for symmetric matrices 
can be similarly applied to Nahm sums for symmetrizable matrices.
In section \ref{section:nahm conjecture},
we see that one direction of Nahm's conjecture, proved by Calegari, Garoufalidis, and Zagier in \cite{CGZ} for the symmetric case, 
also holds for symmetrizable case (Theorem \ref{thm:torsion of modular}).
As with the symmetric case, the proof uses the injectivity of the map $R_\zeta: B(F) / m B(F) \to F_m^\times / F_m^{\times m}$.
Assuming the injectivity result, the main step of the proof is to study the asymptotic behavior of the Nahm sum at roots of unity, 
which is given in Section \ref{section:asymptotics}.
We provide the explicit formula for the asymptotic expansion of Nahm sums, which is a generalization of the symmetric case given in \cite{GZ-asymptotics}. 
We also need some number-theoretic properties of the constant term in the asymptotic expansion, which are provided in Proposition \ref{prop:in Fm}.
The main ingredient in the constant term is the \emph{cyclic quantum dilogarithm function}.
In Appendix \ref{appendix:cyclic dilog}, we provide some properties of the cyclic quantum dilogarithm function that we use in the proof of Proposition \ref{prop:in Fm}.
Finally, in Section \ref{section:experiments}, we present some experiments on modular Nahm sums for symmetrizable matrices.
We use a numerical method from \cite{Zagier} to search for modular Nahm sum for rank $2$ and rank $3$ cases. 
The resulting lists of modular candidates are given in Table \ref{table:rank2} for rank $2$ case, 
and Table \ref{table:1,1,2} and \ref{table:2,2,1} for rank $3$ case.
For rank $2$ we might expect that Table \ref{table:rank2}, together with symmetric case in \cite[Table 2]{Zagier}, exhausts all modular Nahm sums.
In addition, many candidates in Table \ref{table:rank2} are already known in the literature, and some of them are known to be modular.
For rank $3$, however, the number of modular candidates explodes. We should emphasize that Table \ref{table:1,1,2} and \ref{table:2,2,1} are just the tip of the iceberg.
Providing a complete picture and a systematic understanding are left as a future task.
The only observation on the rank 3 lists in this paper is that there seem to be ``Langlands dual'' pairs of modular Nahm sums.
As an example, we verify numerically that two Nahm sums associated with
$A = \begin{bsmallmatrix}
  1 & 0 & 1/2\\
  0 & 2 & 1\\
  1 & 2 & 2
\end{bsmallmatrix}$ and
$A^{\vee} = \begin{bsmallmatrix}
  1 & 0 & 1\\
  0 & 2 & 2\\
  1/2 & 1 & 2
\end{bsmallmatrix}$, which are related by taking the transpose, are related by the modular $S$-transformation.
More precisely, we will define vector-valued functions $g(\tau)$ and $g^{\vee}(\tau)$
whose components are given by (partial) Nahm sums associated with $A$ and $A^{\vee}$, respectively,
and will conjecture an explicit modular $S$-transformation formula \eqref{eq:modular S for Langlands pair} that 
relate $g(-1/\tau)$ with $g^{\vee}(\tau/2)$ and $g^{\vee}(-1/\tau)$ with $g(\tau/2)$.

\begin{acknowledgment}
  The author would like to thank Shunsuke Tsuchioka for fruitful discussions and for pointing out the identity \eqref{eq:double sum RR x}.
  The author also would like to thank Ole Warnaar for helpful comments.
	This work is supported by JSPS KAKENHI Grant Number JP21J00050.
\end{acknowledgment}

\section{Asymptotics of Nahm sums}
\label{section:asymptotics}
Let $N$ be a natural number.
Suppose that $Q = (A, b, c, d)$ is a quadruple where
$A \in \mathbb{Q}^{N \times N}$ is a matrix, 
$b \in \mathbb{Q}^N$ is a vector,
$c \in \mathbb{Q}$ is a scalar,
and $d \in \mathbb{Z}_{>0}^N$ is a vector,
such that $A D$ is symmetric positive definite where $D = \diag (d_1, \dots, d_N)$.
The \emph{Nahm sum} associated with $Q$ is a $q$-series defined by 
\begin{align}
  f_Q (q) = \sum_{n \in \mathbb{N}^N} \frac{q^{Q(n)}}{(q^{d_1}; q^{d_1})_{n_1} \cdots (q^{d_N};q^{d_N})_{n_N}}
\end{align}
where we denote by, abuse of notation,
\begin{align}
  Q(n) = \frac{1}{2} n^{\mathsf{T}} A D n + n^\mathsf{T} b + c
\end{align}
the quadratic form associated with $Q = (A, b, c, d)$.
We say that $Q$ is \emph{symmetric} if $d_i = 1$ for any $i$. 
We also use the term \emph{symmetrizable} to mean not necessarily symmetric.

\subsection{Asymptotics at roots of unity}
In this section, we study the asymptotics of Nahm sums $f_Q (q)$ at roots of unity.
We think of $f_Q (q)$ as a function on the upper half-plane by setting $q^\lambda = e^{2 \pi i \tau \lambda}$
for any $\lambda \in \mathbb{Q}$ and $\tau \in \mathbb{C}$ with $\Im \tau >0$.
We will consider the limit where $\tau$ tends to a rational number from above in the complex plane.

Let $(z_1, \dots, z_N) \in \mathbb{R}^N$ be the unique solution of the \emph{Nahm's equation}
\begin{align}\label{eq:nahm equation}
  1 - z_i = \prod_{j=1}^N z_j^{A_{ij}}
  \quad \text{for $i = 1, \dots, N$}
\end{align}
such that $0 < z_i < 1$ for any $i$.
In fact, the existence and the uniqueness are proved in the same way as in the symmetric case
(see e.g. \cite[Lemma 2.1]{VlasenkoZwegers})
by using the fact that $A D$ is positive definite.
We define a real number
\begin{align}\label{eq:volume}
  \Lambda = - \sum_{i=1}^N d_i^{-1} \mathrm{L}(z_i)
\end{align}
where 
\begin{align}
  \mathrm{L}(z) = \Li_2 (z) + \frac{1}{2} \log(z) \log(1-z) - \frac{\pi^2}{6}
\end{align}
is the Rogers dilogarithm function shifted by the constant ${\pi^2}/{6}$ so that $\mathrm{L}(1) = 0$.

For a symmetric positive definite matrix $A$ and an analytic function $f(x) = f(x_1, \dots, x_N)$, we consider the formal Gaussian integral:
\begin{align}
  \mathbf{I}_A [f] = \frac{\int_{\mathbb{R}^N} e^{-\frac{1}{2} x^\mathsf{T} A x} f(x)\, dx}
  {\int_{\mathbb{R}^N} e^{-\frac{1}{2} x^\mathsf{T} A x}\, dx}.
\end{align}
For example, we have $\mathbf{I}_A [x_i x_j] = (A^{-1})_{ij}$.
For a positive integer $m$, an $m$th root of unity $\zeta$, and a complex number $w$ with $\lvert w \rvert < 1$,
we define a formal power series 
\begin{align}
  \psi_{w, \zeta} (\nu, \varepsilon) = - \sum_{r\geq2} \sum_{t=1}^{m} \Bigl( B_r \Bigl(1 - \frac{t + \nu}{m} \Bigr) - \delta_{2,r} \frac{\nu^2}{m^2} \Bigr)
    \Li_{2-r} (\zeta^t w) \frac{\varepsilon^{r-1}}{r!},
\end{align}
where $B_r(x)$ is the Bernoulli polynomial of degree $r$ defined by $t e^{tx}/(e^t - 1) = \sum_{r=0}^\infty B_r(x) t^r / r !$,
and $\Li_r (w) = \sum_{k=1}^\infty w^k / k^r$ is the polylogarithm function.
We then consider the formal Gaussian integral
\begin{align}\label{eq:def of I}
  I_{Q, \zeta}(k, \varepsilon) = 
  \mathbf{I}_{\frac{1}{m} \widetilde{A} D} \left[ e^{- \frac{1}{m} x^\mathsf{T} b \varepsilon^{1/2} - \frac{1}{m} (c + \frac{\tr D}{24}) \varepsilon} 
    \prod_{i=1}^{N} \exp \Bigl(\psi_{\zeta_i^{k_i} z_i^{\frac{1}{m}}, \zeta_i} (x_i \varepsilon^{-1/2}, d_i \varepsilon) \Bigr) \right],
\end{align}
for any $k \in (\mathbb{Z}/m \mathbb{Z})^N$ where 
\begin{align}
  \widetilde{A} = A + \diag (z / (1-z))
\end{align}
and $\zeta_i = \zeta^{d_i}$.
The expression \eqref{eq:def of I} has a well-defined meaning as a formal power series in $\mathbb{C} \llbracket \varepsilon \rrbracket$.

For any coprime integers $p$ and $q$, we define the \emph{Dedekind sum} by
\begin{align}
  s (p, q) \coloneqq \sum_{r \bmod q} \dedekind{\frac{r}{q}} \dedekind{\frac{pr}{q}}
\end{align}
where we use the saw wave function defined by
\begin{align}
  ((x)) = 
  \begin{cases}
    x - \floor{x} - \frac{1}{2} &\text{if $x \notin \mathbb{Z}$} \\
    0 &\text{if $x \in \mathbb{Z}$}.
  \end{cases}
\end{align}
For example, we have $s(1, m) = \frac{(m-1)(m-2)}{12m}$.

We say that a positive integer $\delta$ is a \emph{strong denominator} of $Q$ if 
the value of $Q(k)$ modulo $1$ for $k \in \mathbb{Z}^N$ depends only on the residue class of $k$ modulo $\delta$.
We define the quadratic Gauss sum 
\begin{equation}
  G(Q, \alpha) = \frac{1}{\delta^N} \sum_{k \in (\mathbb{Z}/\delta \mathbb{Z})^N} \mathbf{e} (\bar{\alpha} Q(k))
\end{equation}
where $\mathbf{e}(x) = e^{2 \pi i x}$, 
$\bar{\alpha}$ is the reduction of $\alpha$ modulo $\delta$, 
and $\delta$ is a chosen strong denominator of $Q$.
The $G(Q, \alpha)$ does not depend on the choice of $\delta$.

Let $\tilde{f}_Q(\tau) = f_Q(e^{2 \pi i \tau})$.
We now have the following asymptotic formula for the Nahm sum, whose proof is obtained as a minor modification of the proof for the symmetric case in \cite{GZ-asymptotics}.
\begin{theorem}
  Let $Q = (A, b, c, d)$ be a quadruple as above,
  and $\delta$ be a strong denominator of $Q$.
  Suppose that $\alpha$ is a rational number whose denominator $m$ is prime to $\delta$ and $d_1, \dots, d_N$.
  Let $\zeta = \mathbf{e}(\alpha)$ be the corresponding $m$th root of unity.
  Let $\theta_i$ be the $m$th root of $z_i$ such that $\theta_i \in (0, 1)$, where 
  $(z_1, \dots, z_N)$ is the unique solution of the Nahm's equation \eqref{eq:nahm equation}
  such that $z_i \in (0,1)$. Then we have the asymptotic formula
  \begin{align}\label{eq:nahm asymptotic}
    e^{-\frac{\Lambda}{m\varepsilon}} \tilde{f}_Q \Bigl(\alpha + \frac{i\varepsilon}{2 \pi m} \Bigr) \sim
    m^{-\frac{N}{2}} \chi(d, \alpha) c(Q) G(Q, \alpha) S_{Q,\zeta} (\varepsilon)
  \end{align}
  as $\varepsilon$ tends to $0$ from right within the real line.
  Here
  $\chi(d, \alpha) = \prod_{i=1}^N \chi_i$ is the product of 
  the $12$th root of $\zeta$ defined by $\chi_i = \mathbf{e}(s(p_i,q_i)/2)$ where $p_i/q_i = \alpha d_i$,
  \begin{align}
    c(Q) = (\det \widetilde{A} D)^{- \frac{1}{2}} \prod_{i=1}^{N} \theta_i^{b_i/d_i} (1 - z_i)^{\frac{1}{2} - \frac{1}{m}},
  \end{align}
  and
  \begin{align}
    S_{Q, \zeta}(\varepsilon) =
    \biggl(\prod_{i=1}^{N} D_{\zeta_i} (\zeta_i \theta_i)^{- \frac{1}{m}} \biggr)
      \sum_{k \in (\mathbb{Z}/m\mathbb{Z})^N} \zeta^{\overline{Q(k)}}
        \biggl(\prod_{i=1}^{N} \frac{\theta_i^{(k^\mathsf{T}A)_i}}{(\zeta_i \theta_i ; \zeta_i)_{k_i}}\biggr)  I_{Q, \zeta}(k, \varepsilon),
  \end{align}
  where $D_{\zeta} (x)$ is the cyclic quantum dilogarithm function defined by \eqref{eq:cyclic dilog},
  and $\overline{Q(k)}$ is the reduction of $Q(k)$ modulo $m$.
\end{theorem}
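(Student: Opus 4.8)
The plan is to adapt, almost verbatim, the saddle-point analysis of Garoufalidis and Zagier \cite{GZ-asymptotics}, inserting the extra bookkeeping forced by the symmetrizer $D$. Write $q = \zeta e^{-\varepsilon/m}$, so that $\tilde f_Q(\alpha + i\varepsilon/(2\pi m)) = f_Q(\zeta e^{-\varepsilon/m})$ and $q^{d_i} = \zeta_i e^{-d_i\varepsilon/m}$ with $\zeta_i = \zeta^{d_i}$; here the hypothesis $\gcd(m,d_i) = 1$ guarantees that $\zeta_i$ is again a primitive $m$th root of unity. First I would split the summation lattice $\mathbb{N}^N$ into residue classes modulo $m$, writing $n = k + m\nu$ with $k \in \{0,\dots,m-1\}^N$ and $\nu \in \mathbb{N}^N$, and --- in order to make the phase $\zeta^{Q(k+m\nu)}$ transparent --- refine further modulo a strong denominator $\delta$ of $Q$ (which is prime to $m$). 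Using the $m$-periodicity of $t \mapsto \zeta_i^t$, the $i$-th Pochhammer factors as
\begin{align*}
  (q^{d_i};q^{d_i})_{k_i + m\nu_i} = (\zeta_i e^{-d_i\varepsilon/m} ; \zeta_i e^{-d_i\varepsilon/m})_{k_i}\cdot(\text{a tail in the base } q^{d_i m} = e^{-d_i\varepsilon}),
\end{align*}
so that, for each fixed $k$, what is left is a Nahm-type sum over $\nu$ whose $i$-th variable lives in the base $e^{-d_i\varepsilon}$.

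Next I would extract the asymptotics of the two pieces. The finite $k$-part is, after the Euler--Maclaurin expansion of $\log(x;q)$-type products, the factor $\prod_i(\zeta_i\theta_i;\zeta_i)_{k_i}^{-1}$ dressed by $\varepsilon$-corrections; the full expansion is exactly the formal series $\psi_{w,\zeta}(\nu,\varepsilon)$, where the Bernoulli polynomials $B_r(1 - (t+\nu)/m)$ summed over $t = 1,\dots,m$ encode the periodic refinement, and the subtraction $\delta_{2,r}\nu^2/m^2$ removes the would-be term quadratic in the fluctuation at order $\varepsilon^0$ --- this term, upon summing $\sum_{t=1}^m\Li_0(\zeta_i^t w)$ over the $m$th roots of unity, equals $m z_i/(1-z_i)$ when $w^m = z_i$, and is instead absorbed into the matrix $\widetilde{A} = A + \diag(z/(1-z))$. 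I would invoke here the properties of the cyclic quantum dilogarithm from Appendix~\ref{appendix:cyclic dilog}, which identify the leading $k$-independent part with $\prod_i D_{\zeta_i}(\zeta_i\theta_i)^{-1/m}$. For the $\nu$-tail I would apply the saddle-point method: the critical-point equations of the exponent are precisely Nahm's equations \eqref{eq:nahm equation}, with saddle $\nu_i^\ast$ of order $-\log z_i/(m d_i\varepsilon)$, so that $q^{d_i n_i}$ is of size $\theta_i$ at the saddle. This produces the leading exponential $e^{\Lambda/(m\varepsilon)}$, the weight $d_i^{-1}$ in $\Lambda = -\sum d_i^{-1}\mathrm{L}(z_i)$ arising because the $i$-th Pochhammer sits in the base $e^{-d_i\varepsilon}$ and hence contributes $\Li_2$-terms weighted by $1/(d_i\varepsilon)$. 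Rescaling the fluctuation about the saddle by $\varepsilon^{-1/2}$ turns the Gaussian sum over $\nu$ into the formal integral $\mathbf{I}_{\frac1m\widetilde{A} D}[\,\cdot\,]$ of \eqref{eq:def of I}: the exponent's Hessian is $\frac1m\widetilde{A} D$ (the diagonal correction coming from the second derivative of the dilogarithm terms), the linear-in-$x$ part of $n^\mathsf{T} b$ produces the $-\frac1m x^\mathsf{T} b\,\varepsilon^{1/2}$ factor, the constant $c$ and the linear Euler--Maclaurin coefficients of the Pochhammers produce the $-\frac1m(c + \tr D/24)\varepsilon$ factor, and the rescaled $\psi$'s enter as $\psi_{\zeta_i^{k_i}\theta_i,\zeta_i}(x_i\varepsilon^{-1/2}, d_i\varepsilon)$.

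It then remains to collect the subexponential factors and match \eqref{eq:nahm asymptotic}. The linear term $n^\mathsf{T} b$ evaluated at the saddle gives $\prod_i\theta_i^{b_i/d_i}$; the endpoint contributions of the Euler--Maclaurin expansion give $\prod_i(1-z_i)^{1/2 - 1/m}$ and, together with the Gaussian normalization, the determinant factor $(\det\widetilde{A})^{-1/2}$ and the overall power $m^{-N/2}$ (the $\det D$-contributions of the Hessian being accounted for by the base-$e^{-d_i\varepsilon}$ normalizations of the Pochhammers); the $12$th root of unity $\chi_i = \mathbf{e}(s(p_i,q_i)/2)$ with $p_i/q_i = \alpha d_i$ comes from the modular transformation of the Dedekind $\eta$-type factor $(q^{d_i};q^{d_i})_\infty$ at the cusp $\alpha$, for which $\gcd(m,d_i)=1$ is again needed so that $\alpha d_i$ has denominator $m$; the $k$-dependence of the phase and of the saddle value produces $\zeta^{\overline{Q(k)}}$ and $\prod_i\theta_i^{(k^\mathsf{T}A)_i}$; and the quadratic Gauss sum $G(Q,\alpha)$ emerges when the phase $\zeta^{Q(k+m\nu)}$ is summed over the $\delta$-residues of $\nu$ --- by the Chinese remainder theorem, using $\gcd(m,\delta) = 1$, this assembles into $\delta^{-N}\sum_{k\bmod\delta}\mathbf{e}(\bar\alpha Q(k))$, independent of the choice of $\delta$. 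Since all the relevant quantities are honest formal power series in $\mathbb{C}\llbracket\varepsilon\rrbracket$ --- in particular $I_{Q,\zeta}(k,\varepsilon)$, whose well-definedness was already noted --- the identity can be verified coefficient by coefficient.

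The main obstacle is the consistent tracking of the $d_i$-dependence. Concretely: one must check that the $i$-th Pochhammer really behaves as a Nahm-sum Pochhammer in the base $e^{-d_i\varepsilon}$, so that the cyclic quantum dilogarithm enters with the root of unity $\zeta_i = \zeta^{d_i}$ and the Dedekind sum with argument $\alpha d_i$ rather than $\alpha$; and one must check that the Gaussian matrix is exactly $\frac1m\widetilde{A} D$, in this order, which requires care because $A$ itself is not symmetric --- only $AD$ is --- so the symmetrization must be performed at the correct stage when completing the square, and the identity $z_i/(1-z_i)$ for the dilogarithm Hessian must be combined with the correct Nahm equation. Apart from this bookkeeping, no idea beyond the symmetric treatment of \cite{GZ-asymptotics} is needed.
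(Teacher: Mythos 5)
Your proposal follows exactly the route the paper takes: the paper offers no independent argument but states that the theorem follows by a minor modification of the saddle-point/Euler--Maclaurin proof of the symmetric case in \cite{GZ-asymptotics}, which is precisely the adaptation you sketch, including the residue decomposition $n=k+m\nu$, the absorption of the $r=2$ Bernoulli term into $\widetilde{A}=A+\diag(z/(1-z))$, the Hessian $\frac1m\widetilde{A}D$, the Dedekind sum attached to $(q^{d_i};q^{d_i})_\infty$ at the cusp $\alpha d_i$, and the Gauss sum via the $\delta$-residues. Your $d_i$-bookkeeping is consistent with the stated formula (in particular with the corrections the paper notes to \cite{GZ-asymptotics}), so the proposal is correct and essentially coincides with the paper's proof.
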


\begin{remark}
  We have corrected the following errors in \cite{GZ-asymptotics}:
  \begin{enumerate*}[label=(\alph*)]
    \item the sign of $- \frac{1}{m} x^\mathsf{T} b \varepsilon^{1/2}$ in \eqref{eq:def of I},
    \item the missing term $\frac{\tr D}{24}$ (which reduces $\frac{N}{24}$ for symmetric case) in \eqref{eq:def of I},
    \item the absence of the Dedekind sum in $\chi(d, \alpha)$.
  \end{enumerate*}
\end{remark}

\begin{remark}
  When $\alpha = 0$, the asymptotic formula for symmetrizable Nahm sums was previously obtained in \cite{kanade2019searching}.
\end{remark}

\section{Nahm sums and Bloch groups}
\subsection{Bloch groups}
Let $F$ be a number field.
We denote by $Z(F)$ the (additive) free abelian group on $\mathbb{P}^1(F) = F \sqcup \{ \infty \}$.
We denote by $[X] \in Z(F)$ the element corresponding to $X \in \mathbb{P}^1(F)$.
The \emph{Bloch group} of $F$ is the quotient
\begin{align}
  B(F) = A(F) / C(F)
\end{align}
where $A(F)$ is the kernel of the map
\begin{align}
  d : Z(F) \to \wedge^2 F^\times,\quad
  [X] \mapsto X \wedge (1 - X)
\end{align}
(and $[0], [1], [\infty] \mapsto 0$), and $C(F) \subseteq A(F)$ is the subgroup generated by
the \emph{five-term relation}
\begin{align}\label{eq:five term}
  [X] - [Y] + \left[ \frac{Y}{X} \right] - \left[ \frac{1-X^{-1}}{1-Y^{-1}} \right] + \left[ \frac{1-X}{1-Y} \right]
\end{align}
for any $X, Y \in \mathbb{P}^1 (F)$ except where $\frac{0}{0}$ or $\frac{\infty}{\infty}$ appears in \eqref{eq:five term}.
There are several conventions for the definition of the Bloch group, which agree up to $6$-torsion.
We use the definition in \cite{CGZ}.

Let $F_m$ be a field obtained by adjoining to $F$ a primitive $m$th root of unity $\zeta = \zeta_m$.
The extension $F_m / F$ is Galois, and we have an injective morphism
\begin{align*}
  \chi : \Gal(F_m/F) \to (\mathbb{Z}/m \mathbb{Z})^\times
\end{align*}
determined by $\sigma(\zeta) = \zeta^{\chi (\sigma)}$ for any $\sigma$.
The group $F_m^\times / F_m^{\times m}$ is equipped with the (multiplicative) $\mathbb{Z}/m \mathbb{Z}$-module
structure given by $k \mapsto (x \mapsto x^k)$.
We define the $\chi^{-1}$-eigenspace by
\begin{align*}
  (F_m^\times / F_m^{\times m})^{\chi^{-1}} \coloneqq 
  \{ x \in F_m^\times / F_m^{\times m} \mid \forall \sigma \in \Gal(F_m/F),\ \sigma(x) = x^{\chi(\sigma)^{-1}} \}.
\end{align*}

For a number field $F$ that does not contain any non-trivial $m$th root of unity,
Calegari-Garoufalidis-Zagier \cite{CGZ} defined a map
\begin{align}\label{eq:CGZ map}
  R_\zeta : B(F) / n B(F) \to F_m^\times / F_m^{\times m}
\end{align}
such that its image is contained in the $\chi^{-1}$-eigenspace.
This map is defined by using cyclic dilogarithm function \eqref{eq:cyclic dilog}.
They prove the following injectivity result:
\begin{theorem}{\cite[Theorem 1.2]{CGZ}}\label{theorem:injectivity}
  Let $F$ be a number field. 
  There exists an integer $M$ (depending only on $F$) such that, 
  for any integer $m$ prime to $M$
  and any $m$th root of unity $\zeta$,
  the map $R_\zeta$ is injective.
\end{theorem}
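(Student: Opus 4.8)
This is Theorem~1.2 of \cite{CGZ}, so I would follow the strategy there. The organizing idea is that, although $R_\zeta$ is defined by the explicit but opaque recipe built from the cyclic quantum dilogarithm, it is in truth the reduction modulo $m$ of an \'etale regulator, so that its injectivity becomes a consequence of the standard machine relating the Bloch group, $K_3$, and \'etale cohomology. I would first collect the structural input. By Suslin's comparison of $B(F)$ with $K_3$, the group $B(F)$ is finitely generated and, after inverting $6$, isomorphic to $K_3^{\mathrm{ind}}(F)$, which by Borel's theorem is $\mathbb{Z}^{r_2}\oplus(\text{finite cyclic})$ with $r_2$ the number of complex places of $F$. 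Fixing once and for all a positive integer $M_0$ divisible by $6$, by the order of the torsion subgroup of $B(F)$, by $2$, and by the primes of $\mathbb{Q}$ ramified in $F$, one obtains for every $m$ prime to $M_0$ a clean identification $B(F)/mB(F)\cong K_3^{\mathrm{ind}}(F)/m\cong(\mathbb{Z}/m)^{r_2}$; moreover the localization sequence in $K$-theory, together with the vanishing of $K_2$ of finite fields, lets one pass freely between $\mathcal{O}_F$ and the ring $\mathcal{O}_{F,S}$ of $S$-integers for a finite set $S$ of places without changing $K_3$.

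The heart of the matter — and the step I expect to be the main obstacle — is to identify $R_\zeta$ with the composite
\begin{align*}
  B(F)/mB(F)\ \xrightarrow{\ c_{2,1}\ }\ H^{1}\bigl(\mathcal{O}_{F,S},\mathbb{Z}/m(2)\bigr)\ \xrightarrow{\ \operatorname{res}\ }\ H^{1}\bigl(\mathcal{O}_{F_m,S},\mathbb{Z}/m(2)\bigr)\ \xrightarrow{\ \sim\ }\ \bigl(\mathcal{O}_{F_m,S}^{\times}/m\bigr)^{\chi^{-1}}\ \hookrightarrow\ F_m^{\times}/F_m^{\times m}.
\end{align*}
Here cohomology is \'etale, $c_{2,1}$ is the Soul\'e--Beilinson second Chern class, $S$ is the set of places above the primes dividing $mM_0$, and the third arrow is Kummer theory over $F_m$ — legitimate because $\mu_m\subset F_m$ — the weight-two Tate twist matched against Kummer theory in weight one accounting precisely for the passage to the $\chi^{-1}$-eigenspace. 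Establishing this identification amounts to exhibiting the cyclic quantum dilogarithm as an explicit cocycle representative of $c_{2,1}$ evaluated on a Bloch-group element: one must check that the five-term relation \eqref{eq:five term} is sent to a coboundary, so that $R_\zeta$ is well defined on $B(F)$; that the output lies in $\mathcal{O}_{F_m,S}^{\times}/m$ rather than in all of $F_m^{\times}/F_m^{\times m}$, a reciprocity computation for the dilogarithm at roots of unity of the same flavour as the Dedekind-sum bookkeeping in the asymptotic formula of Section~\ref{section:asymptotics}; and that the recipe matches the symbol/Bott-element construction of $c_{2,1}$. Essentially all of the labour of \cite{CGZ} is spent here, and I see no shortcut.

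Granting the factorization, injectivity is then formal. By the Quillen--Lichtenbaum conjecture — a theorem in this range, via Merkurjev--Suslin--Levine and Voevodsky--Rost — the Chern class induces, for $m$ prime to a fixed $M_1\ge M_0$, an isomorphism of $K_3^{\mathrm{ind}}(F)/m$ onto the summand of $H^{1}(\mathcal{O}_{F,S},\mathbb{Z}/m(2))$ complementary to the local contributions indexed by $S$; in particular $c_{2,1}$ is injective on $B(F)/mB(F)$, and its image meets none of the ``junk'' carried by the places of $S$. The remaining two maps I would treat by Galois-cohomological descent along the cyclotomic extension $F_m/F$: using Poitou--Tate duality together with Soul\'e's non-vanishing results and the known structure of the cohomology of cyclotomic fields, one shows that $\operatorname{res}$ is injective on this motivic summand and that the final Kummer inclusion is injective, after absorbing into $M$ the finitely many further auxiliary integers — orders of the relevant $S$-class groups, indices of regulator lattices, and the primes at which the Hochschild--Serre spectral sequence for $F_m/F$ obstructs injectivity of restriction — that this analysis produces. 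Taking $M$ to be a common multiple of $M_1$ and all of these, every $m$ prime to $M$ makes $R_\zeta$ injective, which is the assertion.
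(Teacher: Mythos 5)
Your proposal follows essentially the same route as the paper: the paper does not prove this statement itself but imports it verbatim as Theorem 1.2 of \cite{CGZ}, noting only that the proof there rests on the compatibility of $R_\zeta$ with a Chern class map in algebraic $K$-theory together with the injectivity of that Chern class map, which is exactly the strategy you sketch. Your outline, including the honest deferral of the hard step --- identifying the cyclic quantum dilogarithm recipe with the \'etale regulator mod $m$ --- to \cite{CGZ}, is therefore consistent with the paper's treatment of this theorem as a quoted external result.
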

The proof in \cite{CGZ} uses the compatibility between $R_\zeta$ and 
a Chern class map in algebraic $K$-theory, and the injectivity of this Chern class map.

\subsection{Nahm's conjecture for symmetrizable case}
\label{section:nahm conjecture}
We first observe number-theoretic features of the constant term in the asymptotic expansion \eqref{eq:nahm asymptotic}.
Let $Q = (A, b, c, d)$ be a quadruple as in Section \ref{section:asymptotics}.
We fix a strong denominator $\delta$ of $Q$ such that $\delta d_i^{-1} b_i$ and $\delta d_i^{-1} A_{ij}$ are integers for any $i,j$.
Let $F$ be a number field obtained by adjoining $z_1^{1/\delta}, \dots, z_N^{1/\delta}$ to $\mathbb{Q}$,
where $z_i$ is the solution of the Nahm's equation \eqref{eq:nahm equation} such that $0 < z_i < 1$.
Let $m$ be an integer prime to $\delta$ and $d_1, \dots, d_N$.
Recall that $F_m$ is a field obtained by adjoining to $F$ a primitive $m$th root of unity $\zeta$.
We define a number
\begin{align}\label{eq:def of u}
  u =  \biggl( \prod_{i = 1}^{N} \theta_i^{b_i / d_i} (1 - z_i)^{-\frac{1}{m}}
  D_{\zeta_i} (\zeta_i \theta_i)^{-\frac{1}{m}} \biggr)
  \sum_{k \in (\mathbb{Z}/m\mathbb{Z})^N} \zeta^{\overline{Q(k)}}
    \prod_{i=1}^{N} \frac{\theta_i^{(k^\mathsf{T}A)_i}}{(\zeta_i \theta_i ; \zeta_i)_{k_i}}
\end{align}
which is a part of the constant term in the asymptotic formula \eqref{eq:nahm asymptotic}.
We see that the $m$th power of $u$ satisfies the following properties, 
which is stated (without a proof) in \cite[Theorem 7.1]{CGZ} for the symmetric case.
We will provide the proof for the sake of completeness.
\begin{proposition}\label{prop:in Fm}
  We have $u^m \in F_m$.
  Moreover, if $u^m \neq 0$, then its image in $F_m^\times / F_m^{\times m}$
  belongs to the $\chi^{-1}$-eigenspace.
\end{proposition}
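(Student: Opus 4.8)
The plan is to prove the two assertions separately, using Galois descent for the first and an explicit computation of the Galois action for the second. Let $G = \Gal(F_m/F)$ and recall the embedding $\chi : G \hookrightarrow (\mathbb{Z}/m\mathbb{Z})^\times$. Since $F$ contains no nontrivial $m$th root of unity, $\chi$ is surjective, so $G \cong (\mathbb{Z}/m\mathbb{Z})^\times$. To show $u^m \in F_m$ we first observe that $u$ itself lies in the field $F_m(\theta_1^{1/m}, \dots, \theta_N^{1/m}, D_{\zeta_i}(\zeta_i\theta_i)^{1/m})$; here I need the input from Appendix \ref{appendix:cyclic dilog} that $D_\zeta(x)$ takes values in (an abelian extension of) the cyclotomic field so that the relevant roots make sense and generate a Kummer-type extension of $F_m$. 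Raising $u$ to the $m$th power kills the ambiguous $m$th roots $\theta_i^{1/m}$ (note $\theta_i^m = z_i \in F \subseteq F_m$ and $\theta_i \in (0,1)$ is a fixed real choice) and $D_{\zeta_i}(\zeta_i\theta_i)^{1/m}$, so $u^m$ is a polynomial expression in $\zeta$, the $z_i$, the $\theta_i$-independent quantities $D_{\zeta_i}(\zeta_i\theta_i)$, and the finite $q$-Pochhammer symbols $(\zeta_i\theta_i;\zeta_i)_{k_i}$. Each of these lies in $F_m$: $\zeta \in F_m$ by definition, $z_i \in F$, and $D_{\zeta_i}(\zeta_i\theta_i)$ together with the Pochhammer products lie in the cyclotomic-over-$F$ field $F_m$ by the properties of the cyclic dilogarithm established in the appendix. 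Hence $u^m \in F_m$.

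For the eigenspace statement, assuming $u^m \neq 0$ I must show that for every $\sigma \in G$ one has $\sigma(u^m) = (u^m)^{\chi(\sigma)^{-1}}$ in $F_m^\times/F_m^{\times m}$, i.e. $\sigma(u^m) / (u^m)^{\chi(\sigma)^{-1}}$ is an $m$th power in $F_m^\times$. Write $c = \chi(\sigma)$, so $\sigma(\zeta) = \zeta^c$. The strategy is to compute $\sigma(u)$ directly from the defining formula \eqref{eq:def of u}. Since $z_i \in F$ is fixed by $\sigma$ and $\theta_i$ is the distinguished real $m$th root, $\sigma$ acts on the $\theta_i$ only through roots of unity: $\sigma(\theta_i) = \omega_i \theta_i$ for some $m$th root of unity $\omega_i$, and more precisely one expects $\sigma(\theta_i) = \zeta^{c \cdot e_i}\theta_i$ or similar depending on how $\theta_i^{1/?}$ enters — but crucially $\sigma$ permutes the summands of the sum over $k \in (\mathbb{Z}/m\mathbb{Z})^N$. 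The key algebraic identity to exploit is that conjugating $\zeta \mapsto \zeta^c$ and simultaneously reindexing $k \mapsto c^{-1} k$ (using that $c$ is a unit mod $m$) should, after the functional equations for $D_\zeta$ and for the cyclic Pochhammer $(\zeta\theta;\zeta)_k$ from the appendix, transform the whole expression $u$ into a power of $u$ times an explicit $m$th power. The Gauss-sum factor $\zeta^{\overline{Q(k)}}$ transforms as $\zeta^{c\,\overline{Q(k)}} = \zeta^{\overline{Q(k)}}$ composed with reindexing by $c^{-1}$: since $Q$ is a quadratic form, $Q(c^{-1}k) \equiv c^{-2} Q(k)$ only up to the linear and constant terms, so one must be careful, but modulo $m$th powers these corrections wash out.

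Concretely, the key steps in order are: (1) record from the appendix the exact transformation behaviour of $D_\zeta(\zeta\theta)$ and of the truncated cyclic Pochhammer under $\zeta \mapsto \zeta^c$ together with $\theta \mapsto$ its Galois conjugate; (2) substitute $k \mapsto c^{-1}k$ in the sum defining $u$ and collect the powers of $\zeta$ and of $\theta_i$ that are produced; (3) check that the total exponent of $\zeta$ collected is exactly $\chi(\sigma)^{-1} = c^{-1}$ times the original (modulo $m$, hence modulo $m$th powers after raising to the $m$th power), and that the extra factors of $\theta_i$ and of roots of unity assemble into an $m$th power in $F_m^\times$; (4) conclude $\sigma(u^m) \equiv (u^m)^{c^{-1}} \pmod{F_m^{\times m}}$. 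This mirrors, for the $k$-sum over $(\mathbb{Z}/m\mathbb{Z})^N$, exactly the eigenspace computation that Calegari-Garoufalidis-Zagier carry out for their map $R_\zeta$, so one expects the symmetrizable modification to be purely bookkeeping of the $d_i$'s via $\zeta_i = \zeta^{d_i}$.

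The main obstacle I expect is step (2)--(3): tracking precisely how the non-quadratic part of $\zeta^{\overline{Q(k)}}$ (the terms $n^\mathsf{T}b$ and $c$) and the factors $\theta_i^{(k^\mathsf{T}A)_i}$ behave under the reindexing $k \mapsto c^{-1}k$, and verifying that every discrepancy from the naive $c^{-1}$-scaling is an $m$th power in $F_m^\times$. The condition imposed at the start of the section — that $\delta$ is chosen so that $\delta d_i^{-1} b_i$ and $\delta d_i^{-1} A_{ij}$ are integers and $m$ is prime to $\delta$ and to the $d_i$ — is exactly what guarantees that all the relevant exponents make sense modulo $m$ and that the cyclic Pochhammer $(\zeta_i\theta_i;\zeta_i)_{k_i}$ and $D_{\zeta_i}(\zeta_i\theta_i)$ are well-defined and nonzero; I would use this hypothesis repeatedly. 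Once the cyclic-dilogarithm functional equations of Appendix \ref{appendix:cyclic dilog} are in hand, the remainder is a careful but routine verification that the extra factors are $m$th powers, completing the proof.
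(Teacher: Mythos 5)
Your argument for the first assertion rests on a false premise. Raising $u$ to the $m$th power does \emph{not} reduce everything to quantities lying in $F_m$: the summands of the $k$-sum involve $\theta_i^{(k^\mathsf{T}A)_i}$ and the Pochhammer symbols $(\zeta_i\theta_i;\zeta_i)_{k_i}$ to the \emph{first} power, and $\theta_i=z_i^{1/m}$ generates a nontrivial Kummer extension $H$ of $F_m$ (it is the $m$th power of the whole sum that you get, not a sum of $m$th powers). Likewise $D_{\zeta_i}(\zeta_i\theta_i)$ is not ``$\theta_i$-independent''; it lies in $H$, not in $F_m$. So $u^m\in F_m$ is not a formal consequence of taking $m$th powers -- it is a genuine cancellation. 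The paper proves it by Galois descent from $H$ to $F_m$: writing $u$ in terms of $a_\zeta(\theta)$ as in \eqref{eq:in Fm sum part}, one computes how $\tau\in\Gal(H/F_m)$, acting by $\tau(\theta_i)=\zeta_i^{e_i}\theta_i$, transforms $\prod_i D_{\zeta_i}(\theta_i)^{-1}$ (using \eqref{eq:D zeta x / D x} repeatedly together with the Nahm equation \eqref{eq:nahm equation} for $\theta_i^m$) and how it transforms $a_\zeta(\theta)^m$ (by a shift $k\mapsto k-e$ in the sum), and checks that the two contributions \eqref{eq:u^m in Fm 1} and \eqref{eq:u^m in Fm 2} cancel exactly. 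Your proposal contains no counterpart of this computation, which is the substance of the first claim.

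For the eigenspace assertion, your plan to ``compute $\sigma(u)$ directly'' and reindex the sum by $k\mapsto c^{-1}k$ is not the mechanism that works, and the steps you flag as the expected obstacle are precisely where it breaks down: there is no identity expressing $a_{\zeta^{q}}$ as a reindexed $a_\zeta$, the quadratic form does not transform cleanly under $k\mapsto c^{-1}k$ (as you note), and $\sigma\in\Gal(F_m/F)$ does not even act on $u$ itself, since $u$ involves the chosen $m$th roots $\theta_i$ and $D_{\zeta_i}(\zeta_i\theta_i)^{-1/m}$. The paper instead lifts $\sigma$ to $\tilde\sigma\in\Gal(H/F)$, compares $a_{\zeta^q}(\tilde\theta)$ with the $p$th \emph{power} $a_\zeta(\theta)^p$ (where $p$ lifts $\chi(\sigma)^{-1}$), and introduces the explicit correction element $v$ of \eqref{eq:in Fm def of v}--\eqref{eq:in Fm def of h}; Proposition \ref{prop:cyclic dilog change of zeta}, which controls $D_\zeta(x)^p/D_{\zeta^q}(x)$ modulo $m$th powers, gives $v^m\equiv\sigma(u^m)/u^{mp}\bmod F_m^{\times m}$, and a second descent computation \eqref{eq:in Fm def tau h}--\eqref{eq:in Fm def tau a} shows $v\in F_m$, whence the $\chi^{-1}$-eigenspace property. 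Without this comparison-with-a-$p$th-power device and the change-of-$\zeta$ identity for the cyclic dilogarithm, your outline does not close; as written, both halves of the proposal have genuine gaps.
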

\begin{proof}
  We first see that $u$ can be also written as
  \begin{align*}
    u = \biggl( \prod_{i = 1}^{N} \theta_i^{b_i / d_i}
      D_{\zeta_i} (\theta_i)^{-\frac{1}{m}} \biggr)
      a_\zeta (\theta)
  \end{align*}
  where we define
  \begin{equation}\label{eq:in Fm sum part}
    a_\zeta (\theta) = \sum_{k \in (\mathbb{Z}/m\mathbb{Z})^N}  \zeta^{\overline{Q(k)}}
    \prod_{i=1}^{N} \frac{\theta_i^{(k^\mathsf{T}A)_i}}{(\theta_i ; \zeta_i)_{k_i + 1}}.
  \end{equation}
  This follows from the formula
  \begin{align}\label{eq:D zeta x / D x}
    \frac{D_\zeta (\zeta x)}{D_\zeta (x)} = \frac{(1-x)^m}{1-x^m}.
  \end{align}
  Also note that the summand in \eqref{eq:in Fm sum part} is well-defined over the set $(\mathbb{Z}/m \mathbb{Z})^N$,
  which follows from the fact that $\theta_i^m$ is a solution of the Nahm's equation \eqref{eq:nahm equation} (see Lemma 3.1 in \cite{DG18}). 

  Let $H$ be the Kummer extension of $F_m$ obtained by adjoining the $m$th roots 
  of $z_1^{1/\delta}, \dots, z_N^{1/\delta}$ to $F_m$.
  We see that $u^m \in H$ by definition. We now see that $u^m$ belongs to the smaller field $F_m$ by showing that it is fixed by any automorphism $\tau \in \Gal(H / F_m)$. 
  Let $e_i$ be integers such that $\zeta_i^{e_i} = \tau(\theta_i)/\theta_i$. Then we see that
  \begin{align}\label{eq:u^m in Fm 1}
    \tau \biggl( \prod_{i = 1}^{N} 
    D_{\zeta_i} (\theta_i)^{-1} \biggr) = 
    \prod_{i = 1}^{N} 
    D_{\zeta_i} (\theta_i)^{-1} \frac{\theta_i^{m(e^\mathsf{T} A)_i}}{(\theta_i; \zeta_i)_{e_i}^m}
  \end{align}
  by repeatedly using \eqref{eq:D zeta x / D x} and also using the fact that $\theta_i^m$ is a solution of the Nahm's equation.
  We also see that
  \begin{align}\label{eq:u^m in Fm 2}
    \biggl(\prod_{i=1}^{N} \frac{\theta_i^{m(e^\mathsf{T} A)_i}}{(\theta_i; \zeta_i)_{e_i}^m} \biggr) \cdot \tau(a_\zeta(\theta)^m) = 
    \biggl(\sum_{k \in (\mathbb{Z}/m\mathbb{Z})^N}  \zeta^{\overline{Q(k)}}
    \prod_{i=1}^{N} \frac{\theta_i^{((k+e)^\mathsf{T} A)_i}}{(\theta_i ; \zeta_i)_{k_i + e_i + 1}} \biggr)^m 
    = a_\zeta(\theta)^m.
  \end{align}
  The equations \eqref{eq:u^m in Fm 1} and \eqref{eq:u^m in Fm 2} imply that $\tau(u^m) = u^m$.

  Now we assume that $u^m \neq 0$.
  Suppose that $\sigma \in \Gal(F_m / F)$.
  Let $q = \chi(\sigma) \in (\mathbb{Z}/m \mathbb{Z})^\times$, and let $p \in \mathbb{Z}_{>0}$ be a lift of $q^{-1}$.
  Let $\tilde{\sigma} \in \Gal(H / F)$ be a lift of $\sigma$,
  and define $\tilde{\theta}_i \coloneqq \tilde{\sigma}(\theta_i)$.
  Let $s_i$ be integers such that $\zeta_i^{s_i} = \tilde{\theta}_i / \theta_i$.
  We define an element
  \begin{equation}\label{eq:in Fm def of v}
    v =
    \biggl(\prod_{i = 1}^{N} h_i \biggr) \cdot
    \frac{a_{\zeta^q} (\tilde{\theta})}{a_\zeta(\theta)^p}  \quad \in H^\times
  \end{equation}
  where
  \begin{equation}\label{eq:in Fm def of h}
    h_i = \frac{\tilde{\theta}_i^{b_i / d_i}} {\theta_i^{b_i p / d_i}} \cdot 
    \frac{\theta_i^{p (s^\mathsf{T} A)_i}}{(\theta_i;\zeta_i^q)_{ps_i}} \cdot
    \prod_{t=1}^{p-1} \frac{1}{(\theta_i; \zeta_i)_{qt}} \quad \in H^\times.
  \end{equation}
  Then we have $v^m \equiv \sigma(u^m) / u^{mp} \bmod F_m^{\times m}$ by Proposition \ref{prop:cyclic dilog change of zeta}.
  Thus, to prove the proposition, it suffices to see that $v \in F_m$ by showing that $v$ is fixed by any automorphism $\tau \in \Gal(H/F_m)$.
  Again, let $e_i$ be integers such that $\zeta_i^{e_i} = \tau(\theta_i)/\theta_i$.
  We see that
  \begin{equation}\label{eq:in Fm def tau h}
    \begin{split}
          \prod_{i=1}^{N} \frac{\tau(h_i)}{h_i} &= 
          \zeta^{ p s^\mathsf{T} AD e + b e^\mathsf{T} - p b e^\mathsf{T}} 
          \prod_{i=1}^N 
            \frac{(\theta_i; \zeta_i)_{e_i}^p (\theta_i; \zeta_i^q)_{ps_i}}{(\theta_i; \zeta_i^q)_{pe_i} (\zeta^{e_i} \theta_i; \zeta_i^q)_{ps_i}} \\
            &= \zeta^{p s^\mathsf{T} AD e + b e^\mathsf{T} - p b e^\mathsf{T}}
            \prod_{i=1}^N 
            \frac{(\theta_i; \zeta_i)_{e_i}^p}
            {(\tilde{\theta}_i; \zeta_i^q)_{pe_i}} \\
            &= \zeta^{b e^\mathsf{T} - p b e^\mathsf{T}}
            \prod_{i=1}^N 
            \frac{\tilde{\theta}_i^{p(e^\mathsf{T} A)_i} (\theta_i; \zeta_i)_{e_i}^p}
            {\theta_i^{p(e^\mathsf{T} A)_i} (\tilde{\theta}_i; \zeta_i^q)_{pe_i}},
    \end{split}
  \end{equation}
  where we use \eqref{eq:cyclic dilog change of zeta 3} in the first equality.
  We now compute
  \begin{equation}\label{eq:in Fm def tau tilde a}
      \begin{split}
        \zeta^{b e^\mathsf{T}} 
          \biggl(\prod_{i=1}^N \frac{\tilde{\theta}_i^{p (e^\mathsf{T} A)_i}}{(\tilde{\theta}_i; \zeta_i^q)_{pe_i}} \biggr)
          \tau(a_{\zeta^q}(\tilde{\theta})) 
        &=
        \sum_{k \in (\mathbb{Z}/m\mathbb{Z})^N} \zeta^{q \overline{Q(pk)} + p k^\mathsf{T} A D e + b e^\mathsf{T}}
        \prod_{i=1}^{N} \frac{\tilde{\theta}_i^{p((k+e)^\mathsf{T} A)_i}}{(\tilde{\theta}_i ; \zeta_i^q)_{p(k_i+e_i) + 1}} \\
        &=
        \sum_{k \in (\mathbb{Z}/m\mathbb{Z})^N} \zeta^{q \overline{Q(p(k-e))} + p (k-e)^\mathsf{T} A D e + b e^\mathsf{T}}
        \prod_{i=1}^{N} \frac{\tilde{\theta}_i^{p(k^\mathsf{T}A)_i}}{(\tilde{\theta}_i ; \zeta_i^q)_{pk_i + 1}} \\
        &= \zeta^{- p e^\mathsf{T} AD e} a_{\zeta^q}(\tilde{\theta}).
      \end{split}
  \end{equation}
  Similarly, we also compute
  \begin{equation}\label{eq:in Fm def tau a}
    \begin{split}
      \zeta^{p b e^\mathsf{T}} 
        \biggl(\prod_{i=1}^N \frac{\theta_i^{p (e^\mathsf{T} A)_i}}{(\theta_i; \zeta_i)_{e_i}^p} \biggr)
        \tau(a_{\zeta}(\theta))^p
      &=
      \biggl(\sum_{k \in (\mathbb{Z}/m\mathbb{Z})^N} \zeta^{\overline{Q(k)} + k^\mathsf{T} A D e + b e^\mathsf{T}}
      \prod_{i=1}^{N} \frac{\theta_i^{((k+e)^\mathsf{T} A)_i}}{(\theta_i ; \zeta_i)_{k_i+e_i + 1}} \biggr)^p \\
      &=
      \biggl(\sum_{k \in (\mathbb{Z}/m\mathbb{Z})^N} \zeta^{\overline{Q(k-e)} + (k-e)^\mathsf{T} A D e + b e^\mathsf{T}}
      \prod_{i=1}^{N} \frac{\theta_i^{(k^\mathsf{T} A)_i}}{(\theta_i ; \zeta_i)_{k_i + 1}} \biggr)^p \\
      &= \zeta^{- p e^\mathsf{T} AD e} a_\zeta(\theta)^p.
    \end{split}
  \end{equation}
  Combining \eqref{eq:in Fm def of v}--\eqref{eq:in Fm def tau a},
  we see that $\tau(v) = v$, which completes the proof.
  \end{proof}

We now prove the one direction of Nahm's conjecture for symmetrizable case.
We define an element in the Bloch group $B(F)$ by
\begin{align*}
  \xi_{A,d} = \sum_{i=1}^N d_i^{-1} [z_i].
\end{align*}
To be precise, $\xi_{A,d}$ itself does not belong to $B(F)$, 
rather a multiple of $\xi_{A,d}$ by at least $\lcm(d_1, \dots, d_N)$ belongs to $B(F)$.
But we shall not concern ourselves too much with that since we will only care whether $\xi_{A,d}$ is torsion or not.
Finally, we say that the Nahm sum $f_{A,b,c,d}(q)$ is modular to mean that the function $\tilde{f}(\tau) = f_{A,b,c,d}(e^{2 \pi i \tau})$ is 
invariant with respect to some finite index subgroup of $\SL(2, \mathbb{Z})$.

\begin{theorem}\label{thm:torsion of modular}
  Suppose that the Nahm sum $f_Q(q)$ associated with $Q = (A, b, c, d)$ is modular.
  Then $\xi_{A, d}$ is torsion in the Bloch group $B(F)$.
\end{theorem}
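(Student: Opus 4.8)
The plan is to transplant the argument of Calegari--Garoufalidis--Zagier for the symmetric case \cite{CGZ} to the symmetrizable setting, feeding it the asymptotic formula \eqref{eq:nahm asymptotic}, Proposition~\ref{prop:in Fm}, and the injectivity of $R_\zeta$ (Theorem~\ref{theorem:injectivity}). Throughout, $\xi_{A,d}$ is read as in Section~\ref{section:nahm conjecture}, so that $\lcm(d_1,\dots,d_N)\,\xi_{A,d}\in B(F)$; since the moduli $m$ below may all be taken coprime to $\lcm(d_1,\dots,d_N)$, this introduces no difficulty and does not affect torsion-ness. Fix a strong denominator $\delta$ of $Q$ as in Section~\ref{section:nahm conjecture}, so that $F=\mathbb{Q}(z_1^{1/\delta},\dots,z_N^{1/\delta})$ and the constant $M=M(F)$ of Theorem~\ref{theorem:injectivity} are determined. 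Since $\tilde{f}_Q$ is modular it is invariant under a finite-index subgroup $\Gamma\subseteq\SL(2,\mathbb{Z})$, so its $\SL(2,\mathbb{Z})$-translates span a finite-dimensional space $V$ carrying an $\SL(2,\mathbb{Z})$-action with finite image; let $L$ be the number field generated by the Fourier coefficients of a basis of $V$ together with the entries of the matrices representing this action. Finally choose $m$ coprime to $\delta$, to $d_1,\dots,d_N$, to $\lcm(d_1,\dots,d_N)$, to $M$, to the index of $\Gamma$, and to the discriminant of $L$; there are infinitely many pairwise coprime such $m$ (e.g.\ almost all primes). Set $\alpha=1/m$ and $\zeta=\mathbf{e}(\alpha)$.

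\emph{Analytic side.} The asymptotic formula \eqref{eq:nahm asymptotic} gives the leading term of $e^{-\Lambda/(m\varepsilon)}\tilde{f}_Q\bigl(\alpha+i\varepsilon/(2\pi m)\bigr)$ as $m^{-N/2}\chi(d,\alpha)c(Q)G(Q,\alpha)$ times the value of $S_{Q,\zeta}(\varepsilon)$ at $\varepsilon=0$; after removing the factors $(\det\widetilde{A})^{-1/2}\prod_i(1-z_i)^{1/2}$, the root of unity $\chi(d,\alpha)G(Q,\alpha)$, and the power $m^{-N/2}$, this leading constant is precisely the number $u$ of \eqref{eq:def of u}. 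By Proposition~\ref{prop:in Fm}, $u^m\in F_m$, and (assuming the generic case $u^m\neq 0$) its class in $F_m^\times/F_m^{\times m}$ lies in the $\chi^{-1}$-eigenspace.

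\emph{Modular side.} Write $\alpha=\gamma(i\infty)$ with $\gamma\in\SL(2,\mathbb{Z})$ of lower-left entry $m$; then the same radial limit is governed by the expansion at $i\infty$ of $\tilde{f}_Q\circ\gamma$, a finite linear combination of the translates above. The elementary computation of $\gamma^{-1}\bigl(\alpha+i\varepsilon/(2\pi m)\bigr)$ matches the exponential rates (forcing $\Lambda\in\pi^2\mathbb{Q}$) and presents the leading constant as a leading Fourier coefficient (lying in $L$) times an explicit root of unity times an explicit power of $m$. Equating this with the analytic expression and passing to $F_m^\times/F_m^{\times m}$, the crucial step — the symmetrizable analogue of the core computation in \cite[\S7]{CGZ} — is to show that modulo $F_m^{\times m}$ the class of $u^m$ coincides with the class of an \emph{explicit} element depending only on $Q$, $m$ and $\zeta$ (assembled from the Gauss sum $G(Q,\alpha)$, the Dedekind-sum factor $\chi(d,\alpha)$, the $\theta_i^{b_i/d_i}$, and a power of $m$), that is, the non-regulator part of $u^m$ already visible on the analytic side. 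Granting this, one unwinds the definition of $R_\zeta$ through the cyclic quantum dilogarithm — using the rewriting of $u$ in terms of the $D_{\zeta_i}(\theta_i)$ and $a_\zeta(\theta)$ and the identities of Appendix~\ref{appendix:cyclic dilog}, exactly as in the proof of Proposition~\ref{prop:in Fm} and via Proposition~\ref{prop:cyclic dilog change of zeta} — to identify the remaining class with $R_\zeta(\xi_{A,d}\bmod m)$. Hence $R_\zeta(\xi_{A,d}\bmod m)=0$ in $F_m^\times/F_m^{\times m}$.

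Since $m$ is coprime to $M$, Theorem~\ref{theorem:injectivity} says $R_\zeta$ is injective, so $\xi_{A,d}\in mB(F)$; as this holds for infinitely many pairwise coprime $m$ and $B(F)$ is finitely generated up to torsion, $\xi_{A,d}$ must be torsion, proving Theorem~\ref{thm:torsion of modular}. I expect the main obstacle to be the arithmetic comparison on the modular side: pinning down $u$ from the modular transformation and then proving, modulo $F_m^{\times m}$, the equality of $u^m$ with its non-regulator part, all while correctly threading the symmetrizable data — the symmetrizer $D$, the twisted roots $\zeta_i=\zeta^{d_i}$, and the $d_i$-twisted cyclic dilogarithms $D_{\zeta_i}$ — through the CGZ mechanism. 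The preparatory results (the explicit asymptotic formula and the number-theoretic Proposition~\ref{prop:in Fm}) are arranged precisely so that what remains there is the bookkeeping of these explicit factors; the other steps (the choice of $m$, the two radial-limit computations, and the deduction from injectivity) run as in the symmetric case.
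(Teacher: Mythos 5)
Your overall skeleton (radial asymptotics at roots of unity, Proposition~\ref{prop:in Fm}, the injectivity of $R_\zeta$ from Theorem~\ref{theorem:injectivity}, and finite generation of $B(F)$) is the same as the paper's, but the heart of your argument is missing: the ``crucial step'' that you introduce and then invoke with ``Granting this'' is precisely the statement that has to be proved, namely that modularity forces the class of $u^m$ in $F_m^\times/F_m^{\times m}$ to reduce to an explicitly controlled class, so that $R_\zeta(\xi_{A,d})$ (or a fixed power of it) dies. You never derive from the modular transformation a concrete algebraic statement about $u$. The paper's mechanism here is different and more economical than your plan: it does not expand $\tilde f_Q\circ\gamma$ at $i\infty$ for an arbitrary $\gamma\in\SL(2,\mathbb{Z})$ sending $i\infty$ to $1/m$, and it needs no auxiliary field $L$ of Fourier coefficients of a basis of translates. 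Instead it takes $\gamma=\begin{bsmallmatrix}p&q\\r&s\end{bsmallmatrix}$ in the invariance group $\Gamma$ itself (arranged inside $\Gamma(M)$), and uses the exact identity $\tilde f(i\varepsilon/2\pi)=\tilde f(\alpha+ih/(2\pi m))$ with $\alpha=q/s$, $m=s$, $\varepsilon=sh/(1-irh/2\pi)$. Both sides are then expanded by the \emph{same} asymptotic theorem — the left side at $\alpha=0$, whose constant $K$ satisfies $K^2\in F^\times$ — so all constants live in $F$ and $F_m$ from the start; after checking that $\lambda=\Lambda/(2\pi)^2$ is rational (denominator $\ell$), the comparison gives $K\,\mathbf{e}(\lambda r/m)=\mu\,u_\zeta$ with $\mu^{12}\in F_m^\times$, hence the clean statement $u_\zeta^{12\ell}\in F_m^\times$ for infinitely many $m$ prime to $M$ (since $s\equiv 1\bmod M$). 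That statement, combined with Proposition~\ref{prop:in Fm} and the cited results of \cite{CGZ} (Lemma 2.4(e) and Remark 2.6), is what feeds into the injectivity theorem. Your route through the finite-dimensional space of translates could presumably be pushed through (it is closer to the original CGZ argument), but as written the arithmetic comparison at the cusp $1/m$ — controlling the leading coefficient there, showing that its contribution to the $\chi^{-1}$-eigenspace is trivial for your infinitely many $m$, and threading the factors $\chi(d,\alpha)$, $G(Q,\alpha)$, $(\det\widetilde A)^{-1/2}$ and the powers of $m$ — is absent, and you yourself identify it as the main obstacle rather than resolving it.

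A smaller but genuine inaccuracy: you claim $R_\zeta(\xi_{A,d}\bmod m)=0$ on the nose. The root-of-unity factors (the Dedekind-sum character $\chi(d,\alpha)$ and the phase $\mathbf{e}(\lambda r/m)$ coming from the rational exponent at the cusp) and the algebraic factors whose squares lie in $F^\times$ cannot simply be absorbed into ``an explicit element''; this is exactly why the paper only concludes that $R_\zeta(\xi_{A,d})^{12\ell}$ is trivial for infinitely many $m$. That weaker conclusion suffices: it gives $12\ell\,\xi_{A,d}\in mB(F)$ for infinitely many $m$, and finite generation of $B(F)$ then forces $\xi_{A,d}$ to be torsion. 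If you rewrite your argument, aim for a statement of the form ``a fixed bounded power of $u_\zeta$ lies in $F_m^\times$ for infinitely many admissible $m$'' rather than exact vanishing of $R_\zeta(\xi_{A,d})$, and supply the modular-side computation that produces it.
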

\begin{proof}
  Suppose that $\tilde{f}(\tau) = f_{A,b,c,d}(e^{2 \pi i \tau})$ is modular with respect to a finite index subgroup $\Gamma \subseteq \SL(2,\mathbb{Z})$.
  We can assume that $\Gamma$ is contained in the principal congruence subgroup $\Gamma(M)$ for a fixed integer $M$
  by replacing $\Gamma$ by its intersection with $\Gamma(M)$.

  By the asymptotic formula \eqref{eq:nahm asymptotic} for $\alpha = 0$, we have
  \begin{align}\label{eq:nahm asymptotic alpha 0}
    \tilde{f} \Bigl(\frac{i \varepsilon}{2 \pi}\Bigr) = e^{-\frac{\Lambda}{\varepsilon}} (K + O(\varepsilon))  
  \end{align}
  as $\varepsilon$ tends to $0$ from the right, where $K$ is an algebraic number such that $K^2 \in F^\times$.
  The number $\lambda = \Lambda / (2 \pi)^2$ must be rational since the modularity of $\tilde{f}(\tau)$ implies that 
  the function $\tilde{f} (-1/\tau)$ is invariant under some power of $\begin{bsmallmatrix}
    1 & 1 \\ 0 & 1
  \end{bsmallmatrix}$.
  We denote by $\ell$ the denominator of $\lambda$.

  For any positive real number $h$ and any $\begin{bsmallmatrix}
    p & q \\ r & s
  \end{bsmallmatrix} \in \Gamma$, taking $\varepsilon = \frac{s h}{1 - irh/2\pi}$, we find that
  \begin{align}\label{eq:proof of torsion of modular}
    \tilde{f}\Bigl( \frac{i \varepsilon}{2 \pi} \Bigr)
    = \tilde{f}\Bigl( \frac{p i \varepsilon/2\pi + q}{ri\varepsilon/2\pi + s} \Bigr) 
    =\tilde{f}\Bigl(\alpha + \frac{i h}{2 \pi m} \Bigr)
  \end{align}
  where we set $\alpha = q/s$ and $m = s$.
  Comparing the asymptotics of the left and the right expressions in \eqref{eq:proof of torsion of modular} 
  for $h$ tending to $0$ on the positive real line by using \eqref{eq:nahm asymptotic alpha 0} and \eqref{eq:nahm asymptotic},
  we have
  \begin{align}\label{eq:proof of torsion of modular 2}
    K \mathbf{e}(\lambda r / m) = \mu u_\zeta
  \end{align}
  where $u_\zeta = u$ is defined by \eqref{eq:def of u}, and $\mu$ is an algebraic number such that $\mu^{12} \in F_m^{\times}$.
  The equation \eqref{eq:proof of torsion of modular 2} implies that $u_\zeta^{12\ell} \in F_m^{\times}$.
  Since $\Gamma$ is contained in $\Gamma(M)$, we see that $m$ is prime to $M$.

  Consequently, we see that there are infinitely many $m$ such that $u_\zeta^{12\ell}$ for a primitive $m$th root of unity $\zeta$ belongs to $F_m^{\times}$.
  By Proposition \ref{prop:in Fm} and \cite[Lemma 2.4 (e) and Remark 2.6]{CGZ},
  this implies that $R_\zeta(\xi_{A,d})^{12\ell}$ is trivial in $F_m^\times / F_m^{\times m}$ for infinitely many $m$.
  Thus, by the injectivity result Theorem \ref{theorem:injectivity}, we see that $12 \ell \xi_{A,d}$ has a trivial image in $B(F)/m B(F)$ for infinitely many $m$.
  If $\xi_{A,d}$ is not torsion, this contradicts the fact that $B(F)$ is a finitely generated abelian group.
\end{proof}

\section{Experiments on modular Nahm sums}
\label{section:experiments}
In this section, we present some experiments on modular Nahm sums for symmetrizable matrices
by using the numerical method explained in \cite{Zagier}.
The method aims to detect, for a given $q$-series $f(q)$, whether $q^c f(q)$ is likely modular for some $c$.
The actual procedure is as follows:
\begin{enumerate}
  \item Compute $\phi(N) = N \log f(e^{-1/N})$ for four successive values (say $N = 20, 21, 22, 23$).
  \item Take the third difference of the values computed in (1). If this value is extremely small, 
  then the function $q^{c} f(q)$ is likely modular for some $c$. 
\end{enumerate}
This method is justified by the following reasoning:
If $q^{c} f(q)$ is modular for some $c$, $\phi(N)$ is approximated to high precision by a quadratic polynomial in $n$ (see \cite[Lemma 3.1]{VlasenkoZwegers}),
and thus its third difference should be extremely small.
For Nahm sums we actually only need three values in the step (1)
since we know the constant term of the quadratic polynomial by the formula \eqref{eq:volume}.

We use this method to give lists of candidates of modular Nahm sums in rank $2$ (Table \ref{table:rank2}), and 
rank $3$ with specific symmetrizers (Table \ref{table:1,1,2} for $d = (1,1,2)$ and Table \ref{table:2,2,1} for $d = (2,2,1)$).
For rank $2$, 
we present all candidates found in the search 
\begin{equation*}
  A = \frac{1}{a'}
\begin{bmatrix} 
  a_{11} & a_{12}\\  
  d_1^{-1} d_2 a_{12} & a_{22}
\end{bmatrix},\quad 
b = \frac{1}{b'} \begin{bmatrix} b_1 \\ b_2 \end{bmatrix}
\end{equation*}
for $-10 \leq a_{ij}, b_i \leq 20$ and $1 \leq a', b', d_i \leq 10$.
We might expect that these exhaust all modular Nahm sums.
For rank $3$, we find many more candidates.
In Table \ref{table:1,1,2} and Table \ref{table:2,2,1}, we only present the candidates with relatively small numerators and denominators in $A$ and $b$,
and actually there are many other candidates that we have not included here due to space constraints.
The program used for the search is available here\footnote{\url{https://github.com/yuma-mizuno/modular-search-for-Nahm-sums-with-symmetrizers}}.

\begin{table}[t]
  \begin{adjustbox}{valign=t}
    \begin{tblr}{
        colspec = {Q[l,m,mode=math]Q[c,m,mode=math]Q[r,m,mode=math]Q[r,m,mode=math]},
        row{3,4,14,15,16,17,18} = {rowsep+=2pt},
      } 
      \hline
      A & b & c & d \\ \hline
      \SetCell[r=5]{m}{\begin{pmatrix}
        2 & 1 \\
        2 & 2
      \end{pmatrix}}
      & \begin{bsmallmatrix} 0 \\ 0 \end{bsmallmatrix} & -\frac{1}{24} 
      & \SetCell[r=5]{m}{(1,2)} \\
      & \begin{bsmallmatrix} 0 \\ 1 \end{bsmallmatrix} & \frac{1}{24} \\ 
      & \begin{bsmallmatrix} -1 \\ -1 \end{bsmallmatrix} & -\frac{1}{24} \\ 
      & \begin{bsmallmatrix} -1/2 \\ 0 \end{bsmallmatrix} & -\frac{1}{48} \\ 
      & \begin{bsmallmatrix} 1 \\ 2 \end{bsmallmatrix} & \frac{7}{24} & \\ 
      \hline
      \SetCell[r=3]{m}{\begin{pmatrix}
        1 & 1/2 \\
        1 & 1
      \end{pmatrix}}
      & \begin{bsmallmatrix} 0 \\ 0 \end{bsmallmatrix} & -\frac{3}{56}
      & \SetCell[r=3]{m}{(1,2)} \\
      & \begin{bsmallmatrix} 0 \\ 1 \end{bsmallmatrix} & \frac{1}{56} \\ 
      & \begin{bsmallmatrix} 1 \\ 1 \end{bsmallmatrix} & \frac{9}{56} \\ 
      \hline
      \SetCell[r=1]{m}{\begin{pmatrix}
        4 & 2 \\
        6 & 4
      \end{pmatrix}} 
      & \begin{bsmallmatrix} 0 \\ 0 \end{bsmallmatrix} & -\frac{1}{24}
      & \SetCell[r=1]{m}{(1,3)} \\
      \hline
      \SetCell[r=3]{m}{\begin{pmatrix}
        2 & 1 \\
        3 & 2
      \end{pmatrix}}
      & \begin{bsmallmatrix} 0 \\ 0 \end{bsmallmatrix} & -\frac{1}{18}
      & \SetCell[r=3]{m}{(1,3)} \\
      & \begin{bsmallmatrix} 1 \\ 3 \end{bsmallmatrix} & \frac{5}{18}   \\ 
      & \begin{bsmallmatrix} 2 \\ 3 \end{bsmallmatrix} & \frac{11}{18} \\ 
      \hline
      \SetCell[r=2]{m}{\begin{pmatrix}
          3 & 2 \\
          4 & 4
        \end{pmatrix}}
      & \begin{bsmallmatrix} -1/2 \\ 0 \end{bsmallmatrix} & -\frac{1}{60}
      & \SetCell[r=2]{m}{(1,2)} \\
      & \begin{bsmallmatrix} 1/2 \\ 2 \end{bsmallmatrix} & \frac{11}{60}   \\
      \hline
      \SetCell[r=3]{m}{\begin{pmatrix}
          3/2 & 1/2 \\
          1   & 1
        \end{pmatrix}}
      & \begin{bsmallmatrix} -1 \\ 1  \end{bsmallmatrix} & \frac{9}{40} 
      & \SetCell[r=3]{m}{(1,2)} \\
      & \begin{bsmallmatrix} -1/2 \\ 0  \end{bsmallmatrix} & \frac{-1}{40}    \\
      & \begin{bsmallmatrix} 0 \\ 1   \end{bsmallmatrix} & \frac{1}{40}     \\
      \hline
      \SetCell[r=2]{m}{\begin{pmatrix}
        3 & 1 \\
        4 & 2
      \end{pmatrix}}
    & \begin{bsmallmatrix} -1/2 \\ 0 \end{bsmallmatrix} & -\frac{1}{16}
    & \SetCell[r=2]{m}{(1,4)} \\
    & \begin{bsmallmatrix} 3/2 \\ 4 \end{bsmallmatrix} & \frac{7}{16}   \\
    \hline
    \end{tblr} 
    \end{adjustbox}\hfill
    \begin{adjustbox}{valign=t}
    \begin{tblr}{
      colspec = {Q[l,m,mode=math]Q[c,m,mode=math]Q[r,m,mode=math]Q[r,m,mode=math]},
      } 
      \hline
      A & b & c & d\\ \hline 
      \SetCell[r=5]{l,m} \begin{pmatrix}
        1 & -1/2 \\
        -1 & 1
      \end{pmatrix} 
      & \begin{bsmallmatrix} 0 \\ 0 \end{bsmallmatrix} & -\frac{1}{12} 
      & \SetCell[r=5]{m}{(1,2)} \\
      & \begin{bsmallmatrix} -1/2 \\ 1 \end{bsmallmatrix} & \frac{1}{12} \\ 
      & \begin{bsmallmatrix} -1/2 \\ 0 \end{bsmallmatrix} & \frac{1}{12} \\ 
      & \begin{bsmallmatrix} -1/2 \\ 1/2 \end{bsmallmatrix} & \frac{1}{48} \\ 
      & \begin{bsmallmatrix} 0 \\ 1 \end{bsmallmatrix} & \frac{1}{12} \\ 
      \hline
      \SetCell[r=3]{m} \begin{pmatrix}
        2 & -1 \\
        -2 & 2
      \end{pmatrix} 
      & \begin{bsmallmatrix} 0 \\ 0 \end{bsmallmatrix} & -\frac{1}{14}
      & \SetCell[r=3]{m}{(1,2)} \\
      & \begin{bsmallmatrix} -1 \\ 2 \end{bsmallmatrix} & \frac{5}{14} \\ 
      & \begin{bsmallmatrix} 1 \\ 0 \end{bsmallmatrix} & \frac{3}{14} \\ 
      \hline
      \SetCell[r=1]{m} \begin{pmatrix}
        1 & -1/2 \\
        -3/2 & 1
      \end{pmatrix} 
      & \begin{bsmallmatrix} 0 \\ 0 \end{bsmallmatrix} & -\frac{1}{8} 
      & \SetCell[r=1]{m}{(1,3)} \\
      \hline
      \SetCell[r=3]{m} \begin{pmatrix}
        2 & -1 \\
        -3 & 2
      \end{pmatrix} 
      & \begin{bsmallmatrix} 0 \\ 0 \end{bsmallmatrix} & -\frac{1}{9}       
      & \SetCell[r=3]{m}{(1,3)} \\
      & \begin{bsmallmatrix} -1 \\ 3 \end{bsmallmatrix} & \frac{5}{9} \\ 
      & \begin{bsmallmatrix} 1 \\ 0 \end{bsmallmatrix} & \frac{2}{9} \\ 
      \hline
      \SetCell[r=2]{m} \begin{pmatrix}
        1 & -1/2 \\
        -1 & 3/4
      \end{pmatrix} 
      & \begin{bsmallmatrix} -1/2 \\ 1/2 \end{bsmallmatrix} & \frac{1}{60}      
      & \SetCell[r=2]{m}{(1,2)} \\
      & \begin{bsmallmatrix} -1/2 \\ 1 \end{bsmallmatrix} & \frac{1}{15} \\ 
      \hline
      \SetCell[r=3]{m} \begin{pmatrix}
        1 & -1/2 \\
        -1 & 3/2
      \end{pmatrix} 
      & \begin{bsmallmatrix} -3/2 \\ 5/2 \end{bsmallmatrix} & \frac{41}{40}
      & \SetCell[r=3]{m}{(1,2)} \\
      & \begin{bsmallmatrix} -1/2 \\ 1/2 \end{bsmallmatrix} & \frac{1}{40} \\ 
      & \begin{bsmallmatrix} -1/2 \\ 3/2 \end{bsmallmatrix} & \frac{9}{40} \\ 
      \hline
      \SetCell[r=2]{m} \begin{pmatrix}
        1 & -1/2 \\
        -2 & 3/2
      \end{pmatrix} 
      & \begin{bsmallmatrix} -1/2 \\ 1 \end{bsmallmatrix} & -\frac{1}{48}
      & \SetCell[r=2]{m}{(1,4)} \\
      & \begin{bsmallmatrix} -1/2 \\ 3 \end{bsmallmatrix} & \frac{23}{48} \\ 
      \hline
    \end{tblr}
  \end{adjustbox}
  \caption{Symmetrizable modular candidates for rank $2$.}
  \label{table:rank2}
\end{table}

We provide some comments on the lists.
The matrix $A$ in the right table in Table \ref{table:rank2} is the inverse of the matrix $A$ 
of the same row in the left table. More precisely, we expect that the following holds in general:
\begin{conjecture}
  Suppose that the Nahm sum associated with $(A, b, c, d)$ is modular.
  Then the Nahm sum associated with $(A^*, b^*, c^*, d^*)$ is also modular, where
  \begin{align*}
    A^* = A^{-1},\quad
    b^* = A^{-1} b, \quad
    c^* = \frac{1}{2} b^{\mathsf{T}} (AD)^{-1} b - \frac{\tr D}{24} - c, \quad
    d^* = d.
  \end{align*}
\end{conjecture}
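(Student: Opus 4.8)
The plan is to derive the conjecture from an exact modular transformation formula linking the two Nahm sums under the $S$-transformation, suitably normalised by the symmetriser, exactly as in the Kanade--Russell example \eqref{eq:intro KR modular} where $S$ sends $\tau$ to $\tau/\lcm(d_1,\dots,d_N)$ up to an explicit scalar matrix. Concretely, one wants an identity of the shape
\[
  \tilde f_{A,b,c,d}\!\Bigl(-\frac{1}{L\tau}\Bigr)=\sum_{j}a_j\,\tilde f_{A^{*},b^{*},c^{*},d^{*}}\!\Bigl(\tau+\tfrac{j}{m}\Bigr)
\]
for a suitable $L$ built from $d$, finitely many rationals $j/m$, and constants $a_j$; granting such a formula the conjecture is immediate, the right-hand side being invariant under a finite-index subgroup of $\SL(2,\mathbb{Z})$ whenever the left-hand side is. Before attacking it I would record the consistency checks that make the statement believable. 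From the original Nahm equation one gets that $z_i^{*}=1-z_i$ is the solution in $(0,1)^N$ of the dual Nahm equation for $A^{*}=A^{-1}$ (and $A^{-1}D=D(AD)^{-1}D$ is again symmetric positive definite, so the dual data is admissible); the relation $[x]+[1-x]=0$ in $B(F)$ up to torsion then gives $\xi_{A^{*},d^{*}}=-\xi_{A,d}$ up to torsion, so the conclusion of Theorem \ref{thm:torsion of modular} is symmetric in $Q$ and $Q^{*}$. Moreover $d^{*}=d$ and $\Lambda^{*}=-\Lambda+\tfrac{\pi^2}{6}\sum_i d_i^{-1}$, which is exactly how the growth rate \eqref{eq:volume} should transform under $S$; matching the prefactor $c(Q)$ of \eqref{eq:nahm asymptotic} at each cusp is what forces $b^{*}=A^{-1}b$ and $c^{*}=\tfrac12 b^{\mathsf{T}}(AD)^{-1}b-\tfrac{\tr D}{24}-c$.

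The mechanism I expect to produce the transformation formula is the (conjectural) state-integral representation of $\tilde f_Q$. One writes $\tilde f_Q(\tau)$, at least as an asymptotic series at each cusp and ideally as an exact Borel-resummed integral, as a Gaussian-type integral $\int_{\mathbb{R}^N}e^{\Phi_{A,b,c}(x)/(2\pi i\tau)}\,\omega(x)$ whose potential combines the quadratic form $\tfrac12 x^{\mathsf{T}}ADx$ with a sum of dilogarithms. The $S$-transformation acts on such integrals by a stationary-phase/Fourier computation in which the quadratic part passes to its Legendre transform, so the relevant Gaussian weight becomes $\tfrac12 y^{\mathsf{T}}(AD)^{-1}y$; since $(AD)^{-1}=D^{-1}(A^{-1}D)D^{-1}$, a linear change of variables rewrites this as $\tfrac12 z^{\mathsf{T}}(A^{-1}D)z$, i.e.\ the Gaussian weight of the dual data, which is the structural source of $A\mapsto A^{-1}$ with symmetriser unchanged. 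The dilogarithmic part transforms through the functional equation of the quantum dilogarithm, contributing at roots of unity precisely the cyclic quantum dilogarithm factors $D_{\zeta_i}$ and the Gauss sums $G(Q,\alpha)$ already visible in Section \ref{section:asymptotics}. The programme is then: (i) make the state-integral rigorous (or its Borel sum); (ii) perform the $S$-transformation of the integral, tracking the quadratic and dilogarithmic pieces separately; (iii) recognise the output as a finite linear combination of dual Nahm sums, reading off $L$ and the $a_j$; (iv) descend from $\SL(2,\mathbb{Z})$ to the finite-index subgroup fixing $\tilde f_Q$. Every step can be cross-checked order by order against the explicit integrals $I_{Q,\zeta}(k,\varepsilon)$ of \eqref{eq:def of I} and against manipulations of the sums \eqref{eq:in Fm sum part}, exactly of the kind carried out in the proof of Proposition \ref{prop:in Fm}.

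The hard part, and the reason the statement is a conjecture rather than a theorem, is step (i): upgrading radial asymptotics to an exact functional equation. The expansion \eqref{eq:nahm asymptotic} is one-sided and in general divergent, so on its own it can never yield an honest modular identity; one needs either a resurgence/Borel-summability analysis showing that the asymptotic series resums to $\tilde f_Q$ in a manner compatible with the $\SL(2,\mathbb{Z})$-action, or an independent $q$-holonomic input producing the finite linear relation directly --- and both are available only sporadically (rank-$2$ Bailey pairs, the Kanade--Russell and Andrews--Gordon families, and similar). A realistic intermediate target I would pursue in parallel is to prove the conjecture under the extra assumption that $\tilde f_Q$ is a component of an explicitly identified vector-valued modular form for $\SL(2,\mathbb{Z})$ --- for instance a character of a rational or logarithmic vertex algebra --- in which case one checks that the contragredient representation is realised by the vector of dual Nahm sums $\tilde f_{Q^{*}}$, using the computation in step (ii) only to match $S$-matrices rather than to establish modularity from nothing, and one is then left with describing which matrices $A$ arise this way.
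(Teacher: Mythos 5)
The statement you are addressing is a \emph{conjecture} in the paper: the author offers no proof, only numerical evidence (the pairing of the left and right halves of Table \ref{table:rank2}, and the conjectured $S$-matrix relation between the ``Langlands dual'' pair built from $A$ and $A^\vee$), so there is no proof of the paper's to compare yours against. Judged as a proof, your text has a genuine and self-acknowledged gap: everything hinges on step (i), an exact state-integral or Borel-resummed representation of $\tilde f_Q$ whose $S$-transform can be computed and recognised as a finite combination of dual Nahm sums, and no such representation is established here or in the paper. The asymptotic expansion \eqref{eq:nahm asymptotic} that the paper does prove is one-sided and formal in $\varepsilon$; as you note yourself, it can constrain a modular transformation (this is exactly how it is used in the proof of Theorem \ref{thm:torsion of modular}) but can never produce one, so steps (ii)--(iv) have nothing rigorous to act on. The proposed identity $\tilde f_{A,b,c,d}(-1/(L\tau))=\sum_j a_j\,\tilde f_{A^*,b^*,c^*,d^*}(\tau+j/m)$ is itself a strengthening of the conjecture, not a reduction of it.

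That said, your consistency checks are correct and are worth recording: $z_i^*=1-z_i$ does solve the Nahm equation for $A^{-1}$ (raise $1-z_j=\prod_k z_k^{A_{jk}}$ to the powers $(A^{-1})_{ij}$ and multiply over $j$); $A^{-1}D=D(AD)^{-1}D$ is again symmetric positive definite, so $(A^*,d)$ is admissible; and with the paper's normalisation $\mathrm{L}(z)+\mathrm{L}(1-z)=-\pi^2/6$ one indeed gets $\Lambda^*=-\Lambda+\frac{\pi^2}{6}\sum_i d_i^{-1}$, compatible with an $S$-type relation between the two growth rates. These computations, together with the observation that Theorem \ref{thm:torsion of modular} treats $\xi_{A,d}$ and $\xi_{A^*,d}=-\xi_{A,d}$ symmetrically, are heuristics of the same kind as the paper's numerics; they motivate the conjecture but do not prove it. So your submission should be read as a research programme (with a reasonable fallback: verify the duality when $\tilde f_Q$ is an identified character of a vertex algebra, matching contragredient $S$-matrices), not as a proof of the statement.
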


Next, we provide case-by-case comments on several Nahm sums in Table \ref{table:rank2}.
\begin{itemize}[wide=0pt]
  \item $A = \begin{bsmallmatrix}
    2 & 1 \\
    2 & 2
  \end{bsmallmatrix}$.
  For $b = \begin{bsmallmatrix}
    0 \\0
  \end{bsmallmatrix}$,
  $\begin{bsmallmatrix}
    0 \\ 1
  \end{bsmallmatrix}$, and
  $\begin{bsmallmatrix}
    1 \\ 2
  \end{bsmallmatrix}$, the Nahm sum is the rank $2$ case of the sum-side of 
  the Bressoud identity \cite{Bressoud80}.
  This fact implies the modularity
  since the product-side of the identity can be written as a ratio of theta functions by 
  the Jacobi's triple product identity.

  \item $A = \begin{bsmallmatrix}
    1 & 1/2 \\
    1 & 1
  \end{bsmallmatrix}$.
The Nahm sum for $\begin{bsmallmatrix} 0 \\ 0 \end{bsmallmatrix}$ is the rank $2$ case of the sum-side of the identity proved by Warnaar \cite[(5.14)]{Warnaar03}, which implies the modularity.
To provide the explicit modular transformation formula, for $\sigma = 0,1$, we define
\begin{align}
  \label{eq:B2 inv -3/56}
  f_{-3/56, \sigma} (q) &\coloneqq q^{-\frac{3}{56}} \sum_{\substack{n_1,n_2 \geq 0 \\ n_1 \equiv \sigma \bmod 2}} \frac{q^{\frac{1}{2} n_1^2 + n_1 n_2 + n_2^2 }}{(q;q)_{n_1} (q^2;q^2)_{n_2}} \\
  \label{eq:B2 inv 1/56}
  f_{1/56, \sigma} (q) &\coloneqq q^{\frac{1}{56}} \sum_{\substack{n_1,n_2 \geq 0 \\ n_1 \equiv \sigma \bmod 2}} \frac{q^{\frac{1}{2} n_1^2 + n_1 n_2 + n_2^2 + n_2}}{(q;q)_{n_1} (q^2;q^2)_{n_2}}  \\
  \label{eq:B2 inv 9/56}
  f_{9/56, \sigma} (q) &\coloneqq q^{\frac{9}{56}} \sum_{\substack{n_1,n_2 \geq 0 \\ n_1 \equiv \sigma \bmod 2}} \frac{q^{\frac{1}{2} n_1^2 + n_1 n_2 + n_2^2 + n_1 + n_2}}{(q;q)_{n_1} (q^2;q^2)_{n_2}} 
\end{align}
so that $f_{c, 0}(q) + f_{c, 1} (q)$ is the Nahm sum associated with $A$.
We conjecture that the following modular transformation formula holds:
\begin{align}
  g(-\frac{1}{\tau}) = 
  \begin{pmatrix}
    S & S \\
    S & -S
  \end{pmatrix}
  g(\frac{\tau}{2}),
\end{align}
where $g(\tau) = (f_{-3/56, 0} (q), f_{1/56, 1} (q), f_{9/56, 1} (q), f_{-3/56, 1} (q), f_{1/56, 0} (q), f_{9/56, 0} (q) )^\mathsf{T}$ and
\begin{align}
  S = 
  \begin{pmatrix}
    \alpha_3 & \alpha_2 & \alpha_1 \\ 
    \alpha_2 & -\alpha_1 & -\alpha_3 \\
    \alpha_1 & -\alpha_3 & \alpha_2
  \end{pmatrix},
  \quad 
  \alpha_k = \sqrt{\frac{2}{7}} \sin \frac{k \pi}{7}
\end{align}
This presumably follows from the Warnaar's identity, together with similar possible identities involving linear terms, but we will not explore it in detail here.

\item $A = \begin{bsmallmatrix} 2 & 1 \\ 3 & 2 \end{bsmallmatrix}$.
The modularity follows from the Kanade-Russell conjecture \eqref{eq:intro KanadeRussell 1}--\eqref{eq:intro KanadeRussell 3}, as we mentioned in the introduction.

\item $A = \begin{bsmallmatrix} 3 & 2 \\ 4 & 4 \end{bsmallmatrix}$.
We have the following identities:
\begin{alignat}{3}
  \label{eq:double sum RR 1}
  &\sum_{n_1, n_2 \geq 0}
  \frac{q^{\frac{3}{2} n_1^2 + 4 n_1n_2 + 4 n_2^2 - \frac{1}{2} n_1 \phantom{+ 0n_2}}}{(q;q)_{n_1} (q^2;q^2)_{n_2}}
  &&= \sum_{n \geq 0} \frac{q^{n^2}}{(q;q)_n} 
  &&= \frac{1}{(q^2,q^3;q^5)_\infty}, \\
  \label{eq:double sum RR 2}
  &\sum_{n_1, n_2 \geq 0}
  \frac{q^{\frac{3}{2} n_1^2 + 4 n_1n_2 + 4 n_2^2 + \frac{1}{2} n_1 + 2 n_2}}{(q;q)_{n_1} (q^2;q^2)_{n_2}}
  &&= \sum_{n \geq 0} \frac{q^{n^2 + n}}{(q;q)_n} 
  &&= \frac{1}{(q^2,q^3;q^5)_\infty}.
\end{alignat}
The second equalities in \eqref{eq:double sum RR 1} and \eqref{eq:double sum RR 2} are the Rogers-Ramanujan identities.
For the first equalities, the author learned the following proof from Shunsuke Tsuchioka.
We can verify that the identities
\begin{align}\label{eq:double sum RR x}
  \sum_{n_1, n_2 \geq 0}
  \frac{q^{\frac{3}{2} n_1^2 + 4 n_1n_2 + 4 n_2^2 - \frac{1}{2} n_1}}{(q;q)_{n_1} (q^2;q^2)_{n_2}} x^{n_1 + 2 n_2}
  = \sum_{n \geq 0} \frac{q^{n^2}}{(q;q)_n} x^n
\end{align}
holds by showing that both sides satisfy the same $q$-difference equation,
since the coefficients of $x^0$ (resp. $x^n$ for $n < 0$) of the both sides are $1$ (resp. $0$).
The first equalities in \eqref{eq:double sum RR 1} and \eqref{eq:double sum RR 2} are obtained by substituting $x$ by $1$ and $q$, respectively, 
into \eqref{eq:double sum RR x}.
In fact, the desired $q$-difference equation can be found algorithmically by using the $q$-version of 
Sister Celine's technique (e.g., \texttt{qMultiSum} package \cite{qMultiSum} can handle such computations).

\item $A = \begin{bsmallmatrix} 3/2 & 1/2 \\ 1 & 1 \end{bsmallmatrix}$.
We have the following identities:
\begin{alignat}{4}
  \label{eq:double sum RR mod 20 1}
  &\sum_{\substack{n_1, n_2 \geq 0 \\ n_1 = 0 \bmod 2}}
  \frac{q^{\frac{3}{4} n_1^2 + n_1n_2 + n_2^2 - n_1 + n_2}}{(q;q)_{n_1} (q^2;q^2)_{n_2}} \notag\\
  &=\sum_{\substack{n_1, n_2 \geq 0 \\ n_1 = 1 \bmod 2}}
  \frac{q^{\frac{3}{4} n_1^2 + n_1n_2 + n_2^2 - \frac{1}{2} n_1}}{(q;q)_{n_1} (q^2;q^2)_{n_2}}
  &&= \sum_{n \geq 0} \frac{q^{n^2+n}}{(q;q)_{2n+1}}
  &&= \frac{(q,q^9, q^{10}; q^{10})_\infty (q^8,q^{12}, q^{20})}{(q;q)_\infty}, \\
  \label{eq:double sum RR mod 20 2}
  &\sum_{\substack{n_1, n_2 \geq 0 \\ n_1 = 1 \bmod 2}}
  \frac{q^{\frac{3}{4} n_1^2 + n_1n_2 + n_2^2 - n_1 + n_2}}{(q;q)_{n_1} (q^2;q^2)_{n_2}} \notag\\
  &=\sum_{\substack{n_1, n_2 \geq 0 \\ n_1 = 0 \bmod 2}}
  \frac{q^{\frac{3}{4} n_1^2 + n_1n_2 + n_2^2 - \frac{1}{2} n_1}}{(q;q)_{n_1} (q^2;q^2)_{n_2}}
  &&= \sum_{n \geq 0} \frac{q^{n^2}}{(q;q)_{2n}} 
  &&= \frac{(q^2,q^8, q^{10}; q^{10})_\infty (q^6,q^{14}, q^{20})}{(q;q)_\infty}, \\
  \label{eq:double sum RR mod 20 3}
  &\sum_{n_1, n_2 \geq 0}
  \frac{q^{\frac{3}{4} n_1^2 + n_1n_2 + n_2^2 + n_2}}{(q;q)_{n_1} (q^2;q^2)_{n_2}}
  &&= \sum_{n \geq 0} \frac{q^{n^2 + n}}{(q;q)_{2n}} 
  &&= \frac{(q^3,q^7, q^{10}; q^{10})_\infty (q^4,q^{16}, q^{20})}{(q;q)_\infty}, \\
  \label{eq:double sum RR mod 20 4}
  &\sum_{n_1, n_2 \geq 1}
  \frac{q^{\frac{3}{4} n_1^2 + n_1n_2 + n_2^2 + n_2}}{(q;q)_{n_1} (q^2;q^2)_{n_2}}
  &&= \sum_{n \geq 0} \frac{q^{n^2 + 2n}}{(q;q)_{2n+1}} 
  &&= \frac{(q^4,q^6, q^{10}; q^{10})_\infty (q^2,q^{18}, q^{20})}{(q;q)_\infty}.
\end{alignat}
The rightmost equalities in 
\eqref{eq:double sum RR mod 20 1}, \eqref{eq:double sum RR mod 20 2}, \eqref{eq:double sum RR mod 20 3}, and \eqref{eq:double sum RR mod 20 4} are
(99), (98), (94), and (96), respectively, in the Slater's list \cite{Slater52}.
Other equalities can be easily verified in the same way as for the left equalities in \eqref{eq:double sum RR 1} and \eqref{eq:double sum RR 2} explained earlier.

\item $A = \begin{bsmallmatrix} 4 & 2 \\ 6 & 4 \end{bsmallmatrix}$.
The modularity follows from the Capparelli identities \cite{Capparelli} in the form presented in \cite{KanadeRussell_Staircases, Kursungoz19}:
\begin{align*}
  \sum_{n_1,n_2 \geq 0} \frac{q^{2 n_1^2 + 6 n_1 n_2 + 6 n_2^2}}{(q;q)_{n_1} (q^3;q^3)_{n_2}} = (-q^2,-q^3,-q^4,-q^6; q^6)_\infty.
\end{align*}

\item $A = \begin{bsmallmatrix} 3 & 1 \\ 4 & 2 \end{bsmallmatrix}$.
The modularity follows from the G{\"o}llnitz-Gordon identities in the form presented by Kur{\c{s}}ung{\"o}z \cite[(21) and (22)]{Kursungoz19}:
\begin{align*}
  \sum_{n_1,n_2 \geq 0}
    \frac{q^{\frac{3}{2} n_1^2 + 4 n_1 n_2 + 4 n_2^2 - \frac{1}{2}n_1}}{(q;q)_{n_1} (q^4;q^4)_{n_2}} &= \frac{1}{(q,q^4,q^7;q^8)_\infty}, \\
  \sum_{n_1,n_2 \geq 0}
    \frac{q^{\frac{3}{2} n_1^2 + 4 n_1 n_2 + 4 n_2^2 + \frac{3}{2}n_1 + 4n_2}}{(q;q)_{n_1} (q^4;q^4)_{n_2}} &= \frac{1}{(q^3,q^4,q^5;q^8)_\infty}.
\end{align*}

\item $A = \begin{bsmallmatrix} 1 & -1/2 \\ -1 & 1 \end{bsmallmatrix}$ or $\begin{bsmallmatrix} 1 & -3/2 \\ -1 & 1 \end{bsmallmatrix}$.
If $b = \begin{bsmallmatrix} 0 \\ 0 \end{bsmallmatrix}$, the Nahm sum coincides, up to a multiplication by a product of the dedekind eta function, 
with a certain character of the integrable highest weight module $L(2 \Lambda_0)$ of the affine Lie algebra of type $D_3^{(2)}$ or $D_4^{(3)}$.
Such a character formula was conjectured in \cite{HKOTT} and recently proved in \cite{okado_parafermionic_2022}.
Then the modularity follows from the result by Kac and Peterson \cite{KacPet}.

\end{itemize}

\afterpage{
  \clearpage
{\small
\begin{paracol}{3}
\begin{tblr}{  
  colspec = {Q[l,m,mode=math]Q[c,m,mode=math]Q[r,m,mode=math]},
  column{2} = {colsep+=-5pt}, 
  } 
  \hline
  A & b & c \\ \hline
  \SetCell[r=2]{m}{\begin{pmatrix}
    1 & 1 & 0 \\ 
    1 & 2 & 1 \\ 
    0 & 2 & 4 \\ 
    \end{pmatrix}}
    &\begin{bsmallmatrix}1/2 \\ 1 \\ 0 \\ \end{bsmallmatrix}&\frac{7}{120}\\
    &\begin{bsmallmatrix}1/2 \\ 1 \\ 4 \\ \end{bsmallmatrix}&\frac{103}{120}\\
    \hline
    \SetCell[r=3]{m}{\begin{pmatrix}
    1 & 1 & 0 \\ 
    1 & 3 & 1 \\ 
    0 & 2 & 2 \\ 
    \end{pmatrix}}
    &\begin{bsmallmatrix}1/2 \\ -1 \\ 0 \\ \end{bsmallmatrix}&\frac{3}{56}\\
    &\begin{bsmallmatrix}1/2 \\ 0 \\ 0 \\ \end{bsmallmatrix}&-\frac{1}{56}\\
    &\begin{bsmallmatrix}1/2 \\ 1 \\ 2 \\ \end{bsmallmatrix}&\frac{19}{56}\\
    \hline
    \SetCell[r=5]{m}{\begin{pmatrix}
    1 & 1 & 1 \\ 
    1 & 3 & 2 \\ 
    2 & 4 & 4 \\ 
    \end{pmatrix}}
    &\begin{bsmallmatrix}0 \\ -1/2 \\ 0 \\ \end{bsmallmatrix}&-\frac{1}{32}\\
    &\begin{bsmallmatrix}1/2 \\ -3/2 \\ -2 \\ \end{bsmallmatrix}&\frac{1}{8}\\
    &\begin{bsmallmatrix}1/2 \\ -1/2 \\ 0 \\ \end{bsmallmatrix}&0\\
    &\begin{bsmallmatrix}1/2 \\ 1/2 \\ 2 \\ \end{bsmallmatrix}&\frac{1}{8}\\
    &\begin{bsmallmatrix}1 \\ 1/2 \\ 2 \\ \end{bsmallmatrix}&\frac{7}{32}\\
    \hline
    \SetCell[r=5]{m}{\begin{pmatrix}
    1 & 1 & 0 \\ 
    1 & 4 & 1 \\ 
    0 & 2 & 1 \\ 
    \end{pmatrix}}
    &\begin{bsmallmatrix}1/2 \\ -2 \\ 1 \\ \end{bsmallmatrix}&\frac{7}{24}\\
    &\begin{bsmallmatrix}1/2 \\ 0 \\ 0 \\ \end{bsmallmatrix}&-\frac{1}{48}\\
    &\begin{bsmallmatrix}1/2 \\ 0 \\ 1 \\ \end{bsmallmatrix}&\frac{1}{24}\\
    &\begin{bsmallmatrix}1/2 \\ 2 \\ 1 \\ \end{bsmallmatrix}&\frac{7}{24}\\
    &\begin{bsmallmatrix}1/2 \\ 2 \\ 2 \\ \end{bsmallmatrix}&\frac{23}{48}\\
    \hline
    \SetCell[r=3]{m}{\begin{pmatrix}
    1 & 1 & 0 \\ 
    1 & 4 & 2 \\ 
    0 & 4 & 4 \\ 
    \end{pmatrix}}
    &\begin{bsmallmatrix}1/2 \\ 0 \\ 0 \\ \end{bsmallmatrix}&-\frac{1}{168}\\
    &\begin{bsmallmatrix}1/2 \\ 0 \\ 2 \\ \end{bsmallmatrix}&\frac{47}{168}\\
    &\begin{bsmallmatrix}1/2 \\ 2 \\ 4 \\ \end{bsmallmatrix}&\frac{143}{168}\\
    \hline
    \SetCell[r=2]{m}{\begin{pmatrix}
    1 & 1 & 1 \\ 
    1 & 5 & 4 \\ 
    2 & 8 & 8 \\ 
    \end{pmatrix}}
    &\begin{bsmallmatrix}1/2 \\ -1/2 \\ 0 \\ \end{bsmallmatrix}&\frac{1}{120}\\
    &\begin{bsmallmatrix}1/2 \\ 3/2 \\ 4 \\ \end{bsmallmatrix}&\frac{49}{120}\\
    \hline
    \SetCell[r=2]{m}{\begin{pmatrix}
    1 & 1 & 0 \\ 
    1 & 8 & 1 \\ 
    0 & 2 & 1 \\ 
    \end{pmatrix}}
    &\begin{bsmallmatrix}1/2 \\ 0 \\ 1 \\ \end{bsmallmatrix}&\frac{7}{120}\\
    &\begin{bsmallmatrix}1/2 \\ 4 \\ 1 \\ \end{bsmallmatrix}&\frac{103}{120}\\
    \hline
    \SetCell[r=3]{m}{\begin{pmatrix}
    1 & 1 & 0 \\ 
    1 & 8 & 2 \\ 
    0 & 4 & 2 \\ 
    \end{pmatrix}}
    &\begin{bsmallmatrix}1/2 \\ 0 \\ 0 \\ \end{bsmallmatrix}&-\frac{1}{168}\\
    &\begin{bsmallmatrix}1/2 \\ 2 \\ 0 \\ \end{bsmallmatrix}&\frac{47}{168}\\
    &\begin{bsmallmatrix}1/2 \\ 4 \\ 2 \\ \end{bsmallmatrix}&\frac{143}{168}\\
    \hline
  \end{tblr}
  \switchcolumn \hspace*{-10pt} \hfill
  \begin{tblr}{  
    colspec = {Q[l,m,mode=math]Q[c,m,mode=math]Q[r,m,mode=math]},
    column{2} = {colsep+=-10pt}, 
    } 
    \hline
    A & b & c \\ \hline
    \SetCell[r=9]{m}{\begin{pmatrix}
    2 & 1 & 1 \\ 
    1 & 2 & 1 \\ 
    2 & 2 & 2 \\ 
    \end{pmatrix}}
    &\begin{bsmallmatrix}-1 \\ 0 \\ 1 \\ \end{bsmallmatrix}&\frac{1}{8}\\
    &\begin{bsmallmatrix}-1/2 \\ 0 \\ 0 \\ \end{bsmallmatrix}&-\frac{1}{32}\\
    &\begin{bsmallmatrix}0 \\ -1 \\ 1 \\ \end{bsmallmatrix}&\frac{1}{8}\\
    &\begin{bsmallmatrix}0 \\ -1/2 \\ 0 \\ \end{bsmallmatrix}&-\frac{1}{32}\\
    &\begin{bsmallmatrix}0 \\ 0 \\ 1 \\ \end{bsmallmatrix}&0\\
    &\begin{bsmallmatrix}0 \\ 1 \\ 1 \\ \end{bsmallmatrix}&\frac{1}{8}\\
    &\begin{bsmallmatrix}1 \\ 0 \\ 1 \\ \end{bsmallmatrix}&\frac{1}{8}\\
    &\begin{bsmallmatrix}1/2 \\ 1 \\ 2 \\ \end{bsmallmatrix}&\frac{7}{32}\\
    &\begin{bsmallmatrix}1 \\ 1/2 \\ 2 \\ \end{bsmallmatrix}&\frac{7}{32}\\
    \hline
    \SetCell[r=2]{m}{\begin{pmatrix}
    2 & 1 & 1 \\ 
    1 & 4 & 1 \\ 
    2 & 2 & 2 \\ 
    \end{pmatrix}}
    &\begin{bsmallmatrix}0 \\ 0 \\ 1 \\ \end{bsmallmatrix}&\frac{1}{120}\\
    &\begin{bsmallmatrix}0 \\ 2 \\ 1 \\ \end{bsmallmatrix}&\frac{49}{120}\\
    \hline
    \SetCell[r=5]{m}{\begin{pmatrix}
    2 & 2 & 1 \\ 
    2 & 4 & 2 \\ 
    2 & 4 & 3 \\ 
    \end{pmatrix}}
    &\begin{bsmallmatrix}-1 \\ -1 \\ -1 \\ \end{bsmallmatrix}&\frac{1}{48}\\
    &\begin{bsmallmatrix}0 \\ 0 \\ 0 \\ \end{bsmallmatrix}&-\frac{1}{24}\\
    &\begin{bsmallmatrix}0 \\ 0 \\ 1 \\ \end{bsmallmatrix}&\frac{1}{48}\\
    &\begin{bsmallmatrix}0 \\ 1 \\ 2 \\ \end{bsmallmatrix}&\frac{5}{24}\\
    &\begin{bsmallmatrix}1 \\ 2 \\ 3 \\ \end{bsmallmatrix}&\frac{25}{48}\\
    \hline
    \SetCell[r=4]{m}{\begin{pmatrix}
    1 & 1 & 1/2 \\ 
    1 & 2 & 1 \\ 
    1 & 2 & 3/2 \\ 
    \end{pmatrix}}
    &\begin{bsmallmatrix}0 \\ 0 \\ 0 \\ \end{bsmallmatrix}&-\frac{1}{18}\\
    &\begin{bsmallmatrix}0 \\ 0 \\ 1 \\ \end{bsmallmatrix}&0\\
    &\begin{bsmallmatrix}0 \\ 1 \\ 1 \\ \end{bsmallmatrix}&\frac{1}{9}\\
    &\begin{bsmallmatrix}1 \\ 1 \\ 2 \\ \end{bsmallmatrix}&\frac{5}{18}\\
    \hline
    \SetCell[r=4]{m}{\begin{pmatrix}
    1 & 0 & 1/2 \\ 
    0 & 2 & 1 \\ 
    1 & 2 & 2 \\ 
    \end{pmatrix}}
    &\begin{bsmallmatrix}0 \\ 0 \\ 0 \\ \end{bsmallmatrix}&-\frac{5}{88}\\
    &\begin{bsmallmatrix}0 \\ 0 \\ 1 \\ \end{bsmallmatrix}&-\frac{1}{88}\\
    &\begin{bsmallmatrix}0 \\ 1 \\ 2 \\ \end{bsmallmatrix}&\frac{19}{88}\\
    &\begin{bsmallmatrix}1 \\ 1 \\ 2 \\ \end{bsmallmatrix}&\frac{35}{88}\\
    \hline
  \end{tblr}
  \switchcolumn \hspace*{-10pt} \hfill
  \begin{tblr}{  
    colspec = {Q[l,m,mode=math]Q[c,m,mode=math]Q[r,m,mode=math]},
    column{2} = {colsep+=-10pt}, 
    } 
    \hline
    A & b & c \\ \hline
    \SetCell[r=2]{m}{\begin{pmatrix}
    2 & 2 & 2 \\ 
    2 & 5 & 4 \\ 
    4 & 8 & 8 \\ 
    \end{pmatrix}}
    &\begin{bsmallmatrix}0 \\ -1/2 \\ 0 \\ \end{bsmallmatrix}&-\frac{1}{42}\\
    &\begin{bsmallmatrix}0 \\ 1/2 \\ 2 \\ \end{bsmallmatrix}&\frac{5}{42}\\
    &\begin{bsmallmatrix}1 \\ 3/2 \\ 4 \\ \end{bsmallmatrix}&\frac{17}{42}\\
    \hline
    \SetCell[r=4]{m}{\begin{pmatrix}
    1 & 1 & 0 \\ 
    1 & 3 & 1/2 \\ 
    0 & 1 & 3/2 \\ 
    \end{pmatrix}}
    &\begin{bsmallmatrix}1/2 \\ -1/2 \\ 1/2 \\ \end{bsmallmatrix}&\frac{1}{40}\\
    &\begin{bsmallmatrix}1/2 \\ 1/2 \\ -1/2 \\ \end{bsmallmatrix}&\frac{1}{40}\\
    &\begin{bsmallmatrix}1/2 \\ 1/2 \\ 3/2 \\ \end{bsmallmatrix}&\frac{9}{40}\\
    &\begin{bsmallmatrix}1/2 \\ 3/2 \\ 1/2 \\ \end{bsmallmatrix}&\frac{9}{40}\\
    \hline
    \SetCell[r=3]{m}{\begin{pmatrix}
    1 & 1 & 0 \\ 
    1 & 4 & 1 \\ 
    0 & 2 & 3/2 \\ 
    \end{pmatrix}}
    &\begin{bsmallmatrix}1/2 \\ 0 \\ -1 \\ \end{bsmallmatrix}&\frac{3}{56}\\
    &\begin{bsmallmatrix}1/2 \\ 0 \\ 0 \\ \end{bsmallmatrix}&-\frac{1}{56}\\
    &\begin{bsmallmatrix}1/2 \\ 2 \\ 1 \\ \end{bsmallmatrix}&\frac{19}{56}\\
    \hline
    \SetCell[r=3]{m}{\begin{pmatrix}
    3 & 2 & 2 \\ 
    2 & 4 & 2 \\ 
    4 & 4 & 4 \\ 
    \end{pmatrix}}
    &\begin{bsmallmatrix}-1/2 \\ 0 \\ 0 \\ \end{bsmallmatrix}&-\frac{1}{42}\\
    &\begin{bsmallmatrix}-1/2 \\ 1 \\ 0 \\ \end{bsmallmatrix}&\frac{5}{42}\\
    &\begin{bsmallmatrix}1/2 \\ 2 \\ 2 \\ \end{bsmallmatrix}&\frac{17}{42}\\
    \hline
    \SetCell[r=3]{m}{\begin{pmatrix}
    3/2 & 1 & 1 \\ 
    1 & 3 & 2 \\ 
    2 & 4 & 4 \\ 
    \end{pmatrix}}
    &\begin{bsmallmatrix}-1/2 \\ -1/2 \\ 0 \\ \end{bsmallmatrix}&\frac{1}{168}\\
    &\begin{bsmallmatrix}0 \\ -1/2 \\ 0 \\ \end{bsmallmatrix}&-\frac{5}{168}\\
    &\begin{bsmallmatrix}1/2 \\ 1/2 \\ 2 \\ \end{bsmallmatrix}&\frac{25}{168}\\
    \hline
    \SetCell[r=3]{m}{\begin{pmatrix}
    2 & 1 & 1 \\ 
    1 & 5/2 & 3/2 \\ 
    2 & 3 & 3 \\ 
    \end{pmatrix}}
    &\begin{bsmallmatrix}0 \\ -1 \\ -1 \\ \end{bsmallmatrix}&\frac{1}{168}\\
    &\begin{bsmallmatrix}0 \\ -1/2 \\ 0 \\ \end{bsmallmatrix}&-\frac{5}{168}\\
    &\begin{bsmallmatrix}1 \\ 0 \\ 1 \\ \end{bsmallmatrix}&\frac{25}{168}\\
    \hline
\end{tblr}
\end{paracol}
}
{\centering \captionof{table}{Symmetrizable modular candidates with $d = (1,1,2)$.} \label{table:1,1,2}}
\clearpage
}

\afterpage{
  \clearpage
{\small
\begin{paracol}{3}
\begin{tblr}{
  colspec = {Q[l,m,mode=math]Q[c,m,mode=math]Q[r,m,mode=math]},
  column{2} = {colsep+=-13pt}, 
  } 
  \hline
  A & b & c \\ \hline
  \SetCell[r=4]{m}{\begin{pmatrix}
    1 & 0 & 1 \\ 
    0 & 2 & 2 \\ 
    1/2 & 1 & 2 \\ 
    \end{pmatrix}}
    &\begin{bsmallmatrix}0 \\ 0 \\ 0 \\ \end{bsmallmatrix}&-\frac{7}{88}\\
    &\begin{bsmallmatrix}1 \\ 0 \\ 0 \\ \end{bsmallmatrix}&\frac{1}{88}\\
    &\begin{bsmallmatrix}1 \\ 0 \\ 1 \\ \end{bsmallmatrix}&\frac{9}{88}\\
    &\begin{bsmallmatrix}1 \\ 2 \\ 2 \\ \end{bsmallmatrix}&\frac{49}{88}\\
    \hline
    \SetCell[r=3]{m}{\begin{pmatrix}
    1 & 1 & 1 \\ 
    1 & 2 & 2 \\ 
    1/2 & 1 & 2 \\ 
    \end{pmatrix}}
    &\begin{bsmallmatrix}0 \\ 0 \\ -1/2 \\ \end{bsmallmatrix}&-\frac{3}{56}\\
    &\begin{bsmallmatrix}1 \\ 0 \\ -1/2 \\ \end{bsmallmatrix}&\frac{1}{56}\\
    &\begin{bsmallmatrix}1 \\ 2 \\ 1/2 \\ \end{bsmallmatrix}&\frac{9}{56}\\
    \hline
    \SetCell[r=2]{m}{\begin{pmatrix}
    1 & 1 & 0 \\ 
    1 & 4 & 4 \\ 
    0 & 2 & 3 \\ 
    \end{pmatrix}}
    &\begin{bsmallmatrix}1 \\ 0 \\ -1/2 \\ \end{bsmallmatrix}&\frac{1}{48}\\
    &\begin{bsmallmatrix}1 \\ 4 \\ 3/2 \\ \end{bsmallmatrix}&\frac{25}{48}\\
    \hline
    \SetCell[r=3]{m}{\begin{pmatrix}
    2 & 1 & 0 \\ 
    1 & 2 & 2 \\ 
    0 & 1 & 2 \\ 
    \end{pmatrix}}
    &\begin{bsmallmatrix}0 \\ 1 \\ 0 \\ \end{bsmallmatrix}&-\frac{1}{36}\\
    &\begin{bsmallmatrix}2 \\ 1 \\ 0 \\ \end{bsmallmatrix}&\frac{11}{36}\\
    &\begin{bsmallmatrix}2 \\ 3 \\ 1 \\ \end{bsmallmatrix}&\frac{23}{36}\\
    \hline
    \SetCell[r=10]{m}{\begin{pmatrix}
    2 & 2 & 2 \\ 
    2 & 4 & 4 \\ 
    1 & 2 & 3 \\ 
    \end{pmatrix}}
    &\begin{bsmallmatrix}-1 \\ -2 \\ -3/2 \\ \end{bsmallmatrix}&\frac{1}{24}\\
    &\begin{bsmallmatrix}-1 \\ 0 \\ 1/2 \\ \end{bsmallmatrix}&\frac{1}{24}\\
    &\begin{bsmallmatrix}0 \\ -1 \\ -1 \\ \end{bsmallmatrix}&-\frac{1}{48}\\
    &\begin{bsmallmatrix}0 \\ 0 \\ -1/2 \\ \end{bsmallmatrix}&-\frac{1}{24}\\
    &\begin{bsmallmatrix}0 \\ 0 \\ 0 \\ \end{bsmallmatrix}&-\frac{5}{96}\\
    &\begin{bsmallmatrix}0 \\ 2 \\ 3/2 \\ \end{bsmallmatrix}&\frac{7}{24}\\
    &\begin{bsmallmatrix}1 \\ 0 \\ -1/2 \\ \end{bsmallmatrix}&\frac{1}{24}\\
    &\begin{bsmallmatrix}1 \\ 0 \\ 1/2 \\ \end{bsmallmatrix}&\frac{1}{24}\\
    &\begin{bsmallmatrix}2 \\ 2 \\ 1/2 \\ \end{bsmallmatrix}&\frac{7}{24}\\
    &\begin{bsmallmatrix}3 \\ 4 \\ 5/2 \\ \end{bsmallmatrix}&\frac{25}{24}\\
    \hline
    \SetCell[r=2]{m}{\begin{pmatrix}
    4 & 4 & 2 \\ 
    4 & 5 & 2 \\ 
    1 & 1 & 1 \\ 
    \end{pmatrix}}
    &\begin{bsmallmatrix}0 \\ 0 \\ 1/2 \\ \end{bsmallmatrix}&-\frac{1}{40}\\
    &\begin{bsmallmatrix}4 \\ 4 \\ 1/2 \\ \end{bsmallmatrix}&\frac{31}{40}\\
    \hline
  \end{tblr} 
  \switchcolumn  \hspace*{-15pt} \hfill
  \begin{tblr}{
    colspec = {Q[l,m,mode=math]Q[c,m,mode=math]Q[r,m,mode=math]},
    column{2} = {colsep+=-13pt}, 
    } 
    \hline
    A & b & c \\ \hline
    \SetCell[r=3]{m}{\begin{pmatrix}
    1 & 1 & 1 \\ 
    1 & 2 & 2 \\ 
    1/2 & 1 & 3/2 \\ 
    \end{pmatrix}}
    &\begin{bsmallmatrix}0 \\ 0 \\ 0 \\ \end{bsmallmatrix}&-\frac{5}{72}\\
    &\begin{bsmallmatrix}1 \\ 0 \\ 1/2 \\ \end{bsmallmatrix}&\frac{1}{72}\\
    &\begin{bsmallmatrix}1 \\ 2 \\ 3/2 \\ \end{bsmallmatrix}&\frac{25}{72}\\
    \hline
    \SetCell[r=4]{m}{\begin{pmatrix}
    1 & 0 & 1 \\ 
    0 & 2 & 2 \\ 
    1/2 & 1 & 5/2 \\ 
    \end{pmatrix}}
    &\begin{bsmallmatrix}0 \\ 0 \\ -1/2 \\ \end{bsmallmatrix}&-\frac{5}{72}\\
    &\begin{bsmallmatrix}1 \\ 0 \\ -1 \\ \end{bsmallmatrix}&\frac{1}{8}\\
    &\begin{bsmallmatrix}1 \\ 0 \\ 0 \\ \end{bsmallmatrix}&\frac{1}{72}\\
    &\begin{bsmallmatrix}1 \\ 2 \\ 1 \\ \end{bsmallmatrix}&\frac{25}{72}\\
    \hline
    \SetCell[r=3]{m}{\begin{pmatrix}
    1 & 1 & 1 \\ 
    1 & 3 & 3 \\ 
    1/2 & 3/2 & 5/2 \\ 
    \end{pmatrix}}
    &\begin{bsmallmatrix}0 \\ -1 \\ -1 \\ \end{bsmallmatrix}&-\frac{1}{40}\\
    &\begin{bsmallmatrix}1 \\ -2 \\ -3/2 \\ \end{bsmallmatrix}&\frac{9}{40}\\
    &\begin{bsmallmatrix}1 \\ 0 \\ -1/2 \\ \end{bsmallmatrix}&\frac{1}{40}\\
    \hline
    \SetCell[r=10]{m}{\begin{pmatrix}
    3/2 & 1/2 & 1 \\ 
    1/2 & 3/2 & 1 \\ 
    1/2 & 1/2 & 1 \\ 
    \end{pmatrix}}
    &\begin{bsmallmatrix}-3/2 \\ -1/2 \\ 1/2 \\ \end{bsmallmatrix}&\frac{1}{8}\\
    &\begin{bsmallmatrix}-1/2 \\ -3/2 \\ 1/2 \\ \end{bsmallmatrix}&\frac{1}{8}\\
    &\begin{bsmallmatrix}-1/2 \\ 1/2 \\ 0 \\ \end{bsmallmatrix}&-\frac{1}{32}\\
    &\begin{bsmallmatrix}-1/2 \\ 1/2 \\ 1/2 \\ \end{bsmallmatrix}&0\\
    &\begin{bsmallmatrix}1/2 \\ -1/2 \\ 0 \\ \end{bsmallmatrix}&-\frac{1}{32}\\
    &\begin{bsmallmatrix}1/2 \\ -1/2 \\ 1/2 \\ \end{bsmallmatrix}&0\\
    &\begin{bsmallmatrix}1/2 \\ 3/2 \\ 1/2 \\ \end{bsmallmatrix}&\frac{1}{8}\\
    &\begin{bsmallmatrix}1/2 \\ 3/2 \\ 1 \\ \end{bsmallmatrix}&\frac{7}{32}\\
    &\begin{bsmallmatrix}3/2 \\ 1/2 \\ 1/2 \\ \end{bsmallmatrix}&\frac{1}{8}\\
    &\begin{bsmallmatrix}3/2 \\ 1/2 \\ 1 \\ \end{bsmallmatrix}&\frac{7}{32}\\
    \hline
    \SetCell[r=2]{m}{\begin{pmatrix}
    4 & 4 & 4 \\ 
    4 & 6 & 6 \\ 
    2 & 3 & 4 \\ 
    \end{pmatrix}}
    &\begin{bsmallmatrix}0 \\ -1 \\ -1 \\ \end{bsmallmatrix}&-\frac{1}{60}\\
    &\begin{bsmallmatrix}2 \\ 1 \\ 0 \\ \end{bsmallmatrix}&\frac{11}{60}\\
    \hline
  \end{tblr}
  \switchcolumn \hspace*{-10pt} \hfill
  \begin{tblr}{
    colspec = {Q[l,m,mode=math]Q[c,m,mode=math]Q[r,m,mode=math]},
    column{2} = {colsep+=-13pt}, 
    } 
    \hline
    A & b & c \\ \hline
    \SetCell[r=6]{m}{\begin{pmatrix}
    3/2 & 1/2 & 1 \\ 
    1/2 & 3/2 & 1 \\ 
    1/2 & 1/2 & 3/2 \\ 
    \end{pmatrix}}
    &\begin{bsmallmatrix}-1/2 \\ 1/2 \\ -1/2 \\ \end{bsmallmatrix}&\frac{1}{168}\\
    &\begin{bsmallmatrix}-1/2 \\ 1/2 \\ 0 \\ \end{bsmallmatrix}&-\frac{5}{168}\\
    &\begin{bsmallmatrix}1/2 \\ -1/2 \\ -1/2 \\ \end{bsmallmatrix}&\frac{1}{168}\\
    &\begin{bsmallmatrix}1/2 \\ -1/2 \\ 0 \\ \end{bsmallmatrix}&-\frac{5}{168}\\
    &\begin{bsmallmatrix}1/2 \\ 3/2 \\ 1/2 \\ \end{bsmallmatrix}&\frac{25}{168}\\
    &\begin{bsmallmatrix}3/2 \\ 1/2 \\ 1/2 \\ \end{bsmallmatrix}&\frac{25}{168}\\
    \hline
    \SetCell[r=6]{m}{\begin{pmatrix}
    3/2 & 1/2 & 2 \\ 
    1/2 & 3/2 & 2 \\ 
    1 & 1 & 4 \\ 
    \end{pmatrix}}
    &\begin{bsmallmatrix}-1/2 \\ 1/2 \\ 0 \\ \end{bsmallmatrix}&-\frac{1}{42}\\
    &\begin{bsmallmatrix}-1/2 \\ 1/2 \\ 1 \\ \end{bsmallmatrix}&\frac{5}{42}\\
    &\begin{bsmallmatrix}1/2 \\ -1/2 \\ 0 \\ \end{bsmallmatrix}&-\frac{1}{42}\\
    &\begin{bsmallmatrix}1/2 \\ -1/2 \\ 1 \\ \end{bsmallmatrix}&\frac{5}{42}\\
    &\begin{bsmallmatrix}1/2 \\ 3/2 \\ 2 \\ \end{bsmallmatrix}&\frac{17}{42}\\
    &\begin{bsmallmatrix}3/2 \\ 1/2 \\ 2 \\ \end{bsmallmatrix}&\frac{17}{42}\\
    \hline
    \SetCell[r=4]{m}{\begin{pmatrix}
    5/2 & 3/2 & 1 \\ 
    3/2 & 5/2 & 1 \\ 
    1/2 & 1/2 & 1 \\ 
    \end{pmatrix}}
    &\begin{bsmallmatrix}-1/2 \\ 1/2 \\ 1/2 \\ \end{bsmallmatrix}&\frac{1}{120}\\
    &\begin{bsmallmatrix}1/2 \\ -1/2 \\ 1/2 \\ \end{bsmallmatrix}&\frac{1}{120}\\
    &\begin{bsmallmatrix}3/2 \\ 5/2 \\ 1/2 \\ \end{bsmallmatrix}&\frac{49}{120}\\
    &\begin{bsmallmatrix}5/2 \\ 3/2 \\ 1/2 \\ \end{bsmallmatrix}&\frac{49}{120}\\
    \hline
    \SetCell[r=6]{m}{\begin{pmatrix}
    5/2 & 3/2 & 2 \\ 
    3/2 & 5/2 & 2 \\ 
    1 & 1 & 2 \\ 
    \end{pmatrix}}
    &\begin{bsmallmatrix}-1/2 \\ 1/2 \\ 0 \\ \end{bsmallmatrix}&-\frac{1}{42}\\
    &\begin{bsmallmatrix}1/2 \\ -1/2 \\ 0 \\ \end{bsmallmatrix}&-\frac{1}{42}\\
    &\begin{bsmallmatrix}1/2 \\ 3/2 \\ 0 \\ \end{bsmallmatrix}&\frac{5}{42}\\
    &\begin{bsmallmatrix}3/2 \\ 1/2 \\ 0 \\ \end{bsmallmatrix}&\frac{5}{42}\\
    &\begin{bsmallmatrix}3/2 \\ 5/2 \\ 1 \\ \end{bsmallmatrix}&\frac{17}{42}\\
    &\begin{bsmallmatrix}5/2 \\ 3/2 \\ 1 \\ \end{bsmallmatrix}&\frac{17}{42}\\
    \hline    
\end{tblr}
\end{paracol}
}
{\centering \captionof{table}{Symmetrizable modular candidates with $d = (2,2,1)$.} \label{table:2,2,1}}
\clearpage
}

Finally, we provide a single comment on Table \ref{table:1,1,2} and \ref{table:2,2,1}.
There is no apparent relationship between the candidates in these two lists, 
which are associated with symmetrizers $d = (1,1,2)$ and $d = (2,2,1)$, except that there are several ``Langlands dual'' pairs: 
\begin{gather*}
  \begin{pmatrix}
    2 & 2 & 1\\
    2 & 4 & 2\\
    2 & 4 & 3
  \end{pmatrix}
  \leftrightarrow
  \begin{pmatrix}
    2 & 2 & 2\\
    2 & 4 & 4\\
    1 & 2 & 3
  \end{pmatrix},\quad
  \begin{pmatrix}
    1 & 1 & 1/2\\
    1 & 2 & 1\\
    1 & 2 & 3/2
  \end{pmatrix}
  \leftrightarrow
  \begin{pmatrix}
    1 & 1 & 1\\
    1 & 2 & 2\\
    1/2 & 1 & 3/2
  \end{pmatrix},\\
  \begin{pmatrix}
    1 & 0 & 1/2\\
    0 & 2 & 1\\
    1 & 2 & 2
  \end{pmatrix}
  \leftrightarrow
  \begin{pmatrix}
    1 & 0 & 1\\
    0 & 2 & 2\\
    1/2 & 1 & 2
  \end{pmatrix}.
\end{gather*}
These pairs consist of matrices that are transposed to each other.
The second (resp. first) pair consists of the inverses of (resp. the twice the inverse of) the Cartan matrices of type $B_3$ and $C_3$.
The third pair is more mysterious. We expect that two Nahm sums for the third pair are related by the modular $S$-transformation.
More precisely, according to numerical experiments, we conjecture that the following modular transformation formulas hold.
For matrices
$A = \begin{bsmallmatrix}
  1 & 0 & 1/2\\
  0 & 2 & 1\\
  1 & 2 & 2
\end{bsmallmatrix}$ and
$A^{\vee} = \begin{bsmallmatrix}
  1 & 0 & 1\\
  0 & 2 & 2\\
  1/2 & 1 & 2
\end{bsmallmatrix}$ with symmetrizers $d = (1,1,2)$ and $d^{\vee}=(2,2,1)$, respectively,  
we define vector-valued functions
\begin{align*}
  g (\tau) = 
  \begin{+pmatrix}[rows={l}]
    f_{A, \begin{bsmallmatrix} 0\\0\\0 \end{bsmallmatrix}, -5/88; \, 0} (q) \\
    f_{A, \begin{bsmallmatrix} 0\\0\\1 \end{bsmallmatrix}, -1/88; \, 1} (q) \\
    f_{A, \begin{bsmallmatrix} 0\\1\\1 \end{bsmallmatrix}, 7/88; \, 1} (q) + 
    q f_{A, \begin{bsmallmatrix} 1\\2\\3 \end{bsmallmatrix}, 7/88; \, 1} (q) \\
    f_{A, \begin{bsmallmatrix} 0\\1\\2 \end{bsmallmatrix}, 19/88; \, 0} (q) \\
    f_{A, \begin{bsmallmatrix} 1\\1\\2 \end{bsmallmatrix}, 35/88; \, 0} (q) \\
  \end{+pmatrix},\
  g^{\vee} (\tau) = 
  \begin{+pmatrix}[rows={l}]
    f_{A^{\vee}, \begin{bsmallmatrix} 0\\0\\0 \end{bsmallmatrix}, -7/88} (q) \\
    f_{A^{\vee}, \begin{bsmallmatrix} 0\\0\\1 \end{bsmallmatrix}, 25/88} (q) + 
    f_{A^{\vee}, \begin{bsmallmatrix} 1\\2\\3 \end{bsmallmatrix}, 25/88} (q) \\
    f_{A^{\vee}, \begin{bsmallmatrix} 1\\0\\0 \end{bsmallmatrix}, 1/88} (q) \\
    f_{A^{\vee}, \begin{bsmallmatrix} 1\\0\\1 \end{bsmallmatrix}, 9/88} (q) \\
    f_{A^{\vee}, \begin{bsmallmatrix} 1\\2\\2 \end{bsmallmatrix}, 49/88} (q) \\
  \end{+pmatrix},
\end{align*}
where $f_{A, b, c, d;\, \sigma}(q)$ for $\sigma = 0,1$
is the partial sum of the Nahm sum where the sum is taken over $n \in \mathbb{N}^3$ such that $n_1 \equiv 0 \bmod \sigma$,
and we omit $d$ and $d^{\vee}$ from the notation.
Then we have
\begin{align*}
  g(\tau + 1) = \diag(\zeta_{88}^{-5}, \zeta_{88}^{43},\zeta_{88}^{51},\zeta_{88}^{19},\zeta_{88}^{35}) g(\tau), \quad
  g^{\vee} (\tau + 1) = \diag(\zeta_{88}^{-7}, \zeta_{88}^{25},\zeta_{88}^{1},\zeta_{88}^{9},\zeta_{88}^{49}) g^{\vee} (\tau),
\end{align*}
and we conjecture that
\begin{align}\label{eq:modular S for Langlands pair}
  g (-\frac{1}{\tau}) = S\ g^{\vee} (\frac{\tau}{2}), \quad
  g^{\vee} (-\frac{1}{\tau}) =
  2 S\ g (\frac{\tau}{2}),
\end{align}
where
\begin{align*}
  S=
  \begin{pmatrix}
    \alpha_{5} & \alpha_{4} & \alpha_{3} & \alpha_{2} & \alpha_{1} \\
    \alpha_{4} & \alpha_{1} & -\alpha_{2} & -\alpha_{5} & -\alpha_{3} \\
    \alpha_{3} & -\alpha_{2} & -\alpha_{4} & \alpha_{1} & \alpha_{5} \\
    \alpha_{2} & -\alpha_{5} & \alpha_{1} & \alpha_{3} & -\alpha_{4} \\
    \alpha_{1} & -\alpha_{3} & \alpha_{5} & -\alpha_{4} & \alpha_{2}
  \end{pmatrix}, \quad
  \alpha_k = \sqrt{\frac{2}{11}} \sin \frac{k\pi}{11}.
\end{align*}

\appendix
\section{Cyclic quantum dilogarithm function}
\label{appendix:cyclic dilog}
The \emph{cyclic quantum dilogarithm function} is defined by
\begin{align}
  \label{eq:cyclic dilog}
  D_\zeta (x) = \prod_{t=1}^{m-1} (1 - \zeta^t x)^t
\end{align}
where $\zeta$ is a primitive $m$th root of unity.
Noting that the function $x \mapsto x - \floor{x}$ is periodic with period $1$,
we have the following expression:
\begin{align}
  D_\zeta (x) = \prod_{t \bmod m} (1 - \zeta^t x)^{m (\frac{t}{m} - \floor*{\frac{t}{m}})}.
\end{align}

\begin{proposition}\label{prop:cyclic dilog change of zeta}
  Let $p, q$ be integers coprime to $m$.
  Suppose that $p > 0$ and $pq \equiv 1 \bmod m$. Then we have
  \begin{align}\label{eq:cyclic dilog change of zeta 1}
    \frac{D_\zeta(x)^p} {D_{\zeta^q}(x)} =
    \left((1-x^m)^{p-1} \prod_{t=1}^{p-1} \frac{1}{(x; \zeta)_{\floor*{\frac{mt}{p}} + 1}} \right)^m.
  \end{align}
  Moreover, we also have 
  \begin{align}\label{eq:cyclic dilog change of zeta 2}
    \frac{D_\zeta(x)^p} {D_{\zeta^q}(x)} \equiv
    \left(\prod_{t=1}^{p-1} \frac{1}{(x; \zeta)_{tq}}\right)^m
    \quad\mod (1 - x^m)^m.
  \end{align}
\end{proposition}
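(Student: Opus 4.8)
The plan is to reduce everything to the ``periodized'' form of $D_\zeta$. Writing $\{y\}=y-\floor{y}$ for the fractional part, the expression recorded just after \eqref{eq:cyclic dilog}, namely $D_\zeta(x)=\prod_{t\bmod m}(1-\zeta^t x)^{m\{t/m\}}$, is valid for \emph{any} complete residue system modulo $m$. Since $\gcd(q,m)=1$ the root $\zeta^q$ is again primitive; applying the same formula to it and reindexing the product by $t=qs$ (equivalently $s=pt$, using $pq\equiv1\pmod m$) gives $D_{\zeta^q}(x)=\prod_{t\bmod m}(1-\zeta^t x)^{m\{pt/m\}}$. Choosing the representatives $t\in\{0,\dots,m-1\}$, where $m\{t/m\}=t$ and $m\{pt/m\}=pt-m\floor{pt/m}$, the quotient collapses to
\[
  \frac{D_\zeta(x)^p}{D_{\zeta^q}(x)}=\prod_{t=0}^{m-1}(1-\zeta^t x)^{m\floor{pt/m}}=\left(\prod_{t=1}^{m-1}(1-\zeta^t x)^{\floor{pt/m}}\right)^{m}.
\]

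I would record the case $1\le p\le m-1$, which is the one used in Proposition~\ref{prop:in Fm}; an arbitrary positive lift $p$ of $q^{-1}$ is handled by the same bookkeeping with the floor sums running over the longer range. To obtain \eqref{eq:cyclic dilog change of zeta 1} it then remains to prove the polynomial identity
\[
  \prod_{t=1}^{m-1}(1-\zeta^t x)^{\floor{pt/m}}=(1-x^m)^{p-1}\prod_{t=1}^{p-1}\frac{1}{(x;\zeta)_{\floor{mt/p}+1}},
\]
which I would check by comparing, for each $j$ with $0\le j\le m-1$, the exponent of the linear factor $(1-\zeta^j x)$ on the two sides. Using $1-x^m=\prod_{j=0}^{m-1}(1-\zeta^j x)$, $(x;\zeta)_k=\prod_{i=0}^{k-1}(1-\zeta^i x)$, and the bound $\floor{mt/p}+1\le m-1$ (valid for $1\le t\le p-1$ when $p<m$), this reduces to the elementary count
\[
  \#\bigl\{\,t\in\{1,\dots,p-1\}:\floor{mt/p}<j\,\bigr\}=\floor{jp/m},
\]
which holds since $\floor{mt/p}<j\iff t\le\floor{jp/m}$ (using $jp/m\notin\mathbb{Z}$, as $\gcd(p,m)=1$ and $1\le j\le m-1$) together with $\floor{jp/m}\le p-1$.

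For \eqref{eq:cyclic dilog change of zeta 2} I would derive it from \eqref{eq:cyclic dilog change of zeta 1}. Iterating $(x;\zeta)_{k+m}=(x;\zeta)_k(\zeta^k x;\zeta)_m$ and using $(\zeta^k x;\zeta)_m=1-x^m$ gives $(x;\zeta)_k=(1-x^m)^{\floor{k/m}}(x;\zeta)_{k\bmod m}$, so that
\[
  \prod_{t=1}^{p-1}(x;\zeta)_{tq}=(1-x^m)^{\sum_{t=1}^{p-1}\floor{tq/m}}\prod_{t=1}^{p-1}(x;\zeta)_{tq\bmod m}.
\]
The crux is the index-set identity
\[
  \{\,tq\bmod m:1\le t\le p-1\,\}=\{\,\floor{mt/p}+1:1\le t\le p-1\,\}=\{\,k:1\le k\le m-1,\ 1\le kp\bmod m\le p-1\,\},
\]
in which the first set equals the third because $q\equiv p^{-1}\pmod m$, and the second equals the third because $k=\floor{mt/p}+1$ for some $t\in\{1,\dots,p-1\}$ exactly when the interval $((k-1)p/m,\,kp/m]$ (of length $p/m<1$) contains an integer, which happens iff $kp\bmod m<p$, the integer then being $\floor{kp/m}\in\{1,\dots,p-1\}$. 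Hence $\prod_{t=1}^{p-1}(x;\zeta)_{tq\bmod m}=\prod_{t=1}^{p-1}(x;\zeta)_{\floor{mt/p}+1}$, and substituting into \eqref{eq:cyclic dilog change of zeta 1} gives
\[
  \frac{D_\zeta(x)^p}{D_{\zeta^q}(x)}=(1-x^m)^{\,m\left(p-1+\sum_{t=1}^{p-1}\floor{tq/m}\right)}\left(\prod_{t=1}^{p-1}\frac{1}{(x;\zeta)_{tq}}\right)^{m};
\]
since the prefactor is an $m$-th power of a power of $1-x^m$, this is precisely \eqref{eq:cyclic dilog change of zeta 2} (and the form in which it is applied in Proposition~\ref{prop:in Fm}, where $1-x^m$ specializes to $1-z_i\in F$).

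The step I expect to be most delicate is the lattice-point bookkeeping: matching the exponent of every $(1-\zeta^j x)$ in the first part, and pinning down the index-set identity in the second; both are elementary but require some care, along with the minor point of reading ``$\bmod(1-x^m)^m$'' as equality up to an $m$-th power of a power of $1-x^m$. No deeper input enters.
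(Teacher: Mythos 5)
Your proposal is correct and follows essentially the same route as the paper: the same reduction of $D_\zeta(x)^p/D_{\zeta^q}(x)$ to $\prod_{t=1}^{m-1}(1-\zeta^t x)^{m\floor*{pt/m}}$, an elementary floor-function count for \eqref{eq:cyclic dilog change of zeta 1} (done exponent-by-exponent for each factor $1-\zeta^j x$, where the paper instead telescopes over the counts $N_r=\floor*{mr/p}$), and for \eqref{eq:cyclic dilog change of zeta 2} the same index-set identity $\{tq \bmod m\}=\{\floor*{mt/p}+1\}$, with the congruence read multiplicatively (up to powers of $(1-x^m)^m$), which is exactly how it is used in Proposition \ref{prop:in Fm}. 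The only caveat is your restriction of the detailed check to $1\le p\le m-1$, but, as you indicate, the same bookkeeping goes through verbatim for an arbitrary positive lift $p$ (the only change being that the indices $\floor*{mt/p}+1$ may reach $m$, contributing harmless factors $(x;\zeta)_m=1-x^m$), so this is a presentational choice rather than a gap.
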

\begin{proof}
  We first see that 
  \begin{align*}
    \frac{D_\zeta(x)^p} {D_{\zeta^q}(x)} =
    \prod_{t=1}^{m-1} (1 - \zeta^t x)^{m \floor*{\frac{t p}{m}}}
  \end{align*}
  since
  \begin{align*}
    \prod_{t=1}^{m-1} (1 - \zeta^t x)^{m(\frac{pt}{m} - \floor*{\frac{pt}{m}})}
    = \prod_{t=1}^{m-1} (1 - \zeta^{qt} x)^{m(\frac{pqt}{m} - \floor*{\frac{pqt}{m}})}
    = \prod_{t=1}^{m-1} (1 - \zeta^{qt} x)^t.
  \end{align*}
  Let $N_r$ for $r = 1, \dots, p$ be the number of integer $t$ such that $0 < t < m-1$ and
  $\floor*{\frac{tp}{m}} \leq r - 1$. The $N_r$ is exactly the number of integer $t$
  such that $0 < t < \frac{mr}{p}$, and thus we have the following formulas:
  \begin{align*}\label{eq:floor tp/m 3}
    N_r = \floor*{\frac{mr}{p}} \quad\text{for $r = 1, \dots, p-1$}, \quad
    N_p = m - 1.
  \end{align*}
  Now \eqref{eq:cyclic dilog change of zeta 1} follows from the following computation:
  \begin{align*}
    \prod_{t=1}^{m-1} (1 - \zeta^t x)^{\floor*{\frac{t p}{m}}} =
    \prod_{r=1}^{p} \frac{(x; \zeta)_{N_r+1}^{r-1}}{(x; \zeta)_{N_{r-1}+1}^{r-1}} &=
    (x; \zeta)_{N_p+1}^{p-1} \prod_{r=1}^{p-1} \frac{1}{(x; \zeta)_{N_r+1}} \\ &=
    (1-x^m)^{p-1} \prod_{t=1}^{p-1} \frac{1}{(x; \zeta)_{\floor*{\frac{mt}{p}} + 1}}.
  \end{align*}
  To prove \eqref{eq:cyclic dilog change of zeta 2}, it suffices to show that the sets $M(r)$ and $M'(r)$ defined by
  \begin{align*}
    M(r) &= \{ t \in [1, p-1] \mid \floor*{\frac{mt}{p}} +1 = r \} \\
    M'(r) &= \{ t \in [1, p-1] \mid tq \equiv r \bmod m \}
  \end{align*}
  have the same cardinality for all $r = 1, \dots, m-1$, where $[1, p-1] \coloneqq \{1, 2, \dots, p-1\}$.
  The equation $tq \equiv r \bmod m$ holds if and only if $t \equiv pr \bmod m$.
  The number of such $t$ is the same as the number of $s \in [1,p-1]$ such that 
  $0 < pr - ms <p$, which is equivalent to the condition $\floor*{\frac{ms}{p}} = r-1$.
\end{proof}

We also see that
\begin{equation}\label{eq:cyclic dilog change of zeta 3}
    \prod_{t=0}^{p-1} \frac{(x;\zeta)_{tq}}{(\zeta^e x;\zeta)_{tq}}  
    = \prod_{t=0}^{p-1} \frac{(x;\zeta)_{e}}{(\zeta^{tq} x;\zeta)_{e}}
    = \frac{(x; \zeta)_k^e}{(x; \zeta^q)_{pe}},
\end{equation}
for any $e \in \mathbb{Z}$,
where the left-hand side is the ratio of the $m$th root of the right-hand side of \eqref{eq:cyclic dilog change of zeta 2}
for $\zeta^e x$ and $x$.
The second equality in \eqref{eq:cyclic dilog change of zeta 3} follows from
\begin{align*}
  \prod_{t=0}^{p-1} (\zeta^{tq} x;\zeta)_{e}
  = \prod_{i=0}^{e-1} \prod_{t=0}^{p-1} (1-\zeta^{tq+i} x)
  = \prod_{t=0}^{pe-1} (1-\zeta^{tq} x)
  = (x; \zeta^q)_{pe}.
\end{align*}

\bibliographystyle{plain}
\footnotesize{\bibliography{../../yyyy}}
\end{document}